\newtheorem{theorem}{Theorem}
\newtheorem{proposition}[theorem]{Proposition}
\theoremstyle{definition}
\newtheorem*{remark}{Remark}
\title{Bi-Lipschitz quasiconformal extensions}
\author[K. Matsuzaki]{Katsuhiko Matsuzaki}
\address{Department of Mathematics, School of Education, Waseda University \endgraf
Shinjuku, Tokyo 169-8050, Japan}
\email{matsuzak@waseda.jp}
\subjclass[2020]{Primary 30C62, 30F60; Secondary 30H35}
\keywords{quasisymmetric, quasiconformal, Douady--Earle extension, Beurling--Ahlfors extension, Ahlfors--Weill extension, bi-Lipschitz diffeomorphism}
\thanks{Research supported by 
Japan Society for the Promotion of Science (KAKENHI 23H01078)}
\begin{document}

\maketitle

\begin{abstract}
We survey several methods for extending quasisymmetric homeomorphisms of the real line to bi-Lipschitz diffeomorphisms of the upper half-plane with respect to the hyperbolic metric.
\end{abstract}

\section{Introduction}

An orientation-preserving homeomorphism $F$ of a domain $\Omega \subset \mathbb{C}$ into the complex plane $\mathbb{C}$ is called \textit{quasiconformal} if $F$ has a locally integrable derivative in the distribution sense and the \textit{complex dilatation} $\mu_F(z) = \bar{\partial}F(z)/\partial F(z)$ 
defined for almost every $z \in \Omega$ satisfies 
that its $L^\infty$-norm $\Vert \mu_F \Vert_\infty$ is less than $1$. For a quasiconformal homeomorphism $F$, the \textit{maximal dilatation} is defined by
$$
K(F)=\frac{1+\Vert \mu_F \Vert_\infty}{1-\Vert \mu_F \Vert_\infty}.
$$

When considering the following maps, we examine various ways of extending them continuously to quasiconformal homeomorphisms of the complex plane $\mathbb{C}$ (or the Riemann sphere $\widehat{\mathbb{C}}$):
\begin{itemize}
\setlength{\parskip}{0cm}
\setlength{\itemsep}{0cm}
\item A quasisymmetric homeomorphism $f: \mathbb{R} \to \mathbb{R}$ on the real line $\mathbb{R}$ (or $f: \mathbb{S} \to \mathbb{S}$ on the unit circle $\mathbb{S}$);
\item A conformal mapping $g: \mathbb{H}^* \to \widehat{\mathbb{C}}$ on the lower half-plane $\mathbb{H}^*$ whose image is a quasidisk;
\item A quasisymmetric embedding $f: \mathbb{R} \to \mathbb{C}$.
\end{itemize}
Here, a quasidisk is considered as the image of a half-plane or disk under a quasiconformal self-homeomorphism $F$ of $\widehat{\mathbb{C}}$, and a \textit{quasisymmetric embedding} $f$ is the restriction of $F$ to 
$\mathbb{R}$ or $\mathbb{S}$. As these quasiconformal extensions are not unique, we must choose an appropriate one for our purpose.

In this survey article, we examine quasiconformal extensions that are bi-Lipschitz diffeo\-morphisms with respect to the hyperbolic metrics of the domain and the range. This map can be precisely defined as follows. 
Let $F:\Omega \to \Omega'$ be a homeomorphism between planar domains $\Omega$ and $\Omega'$ 
with hyperbolic distances $d_{\Omega}$ and $d_{\Omega'}$, respectively. 
Then, $F$ is said to be \textit{bi-Lipschitz} in the hyperbolic metric if there exists a constant $L \geq 1$ such that
$$
\frac{1}{L}\, d_{\Omega}(z_1,z_2) \leq d_{\Omega'}(F(z_1),F(z_2)) \leq L\,d_{\Omega}(z_1,z_2)
$$
holds for any $z_1, z_2 \in \Omega$. The least value of such a constant $L$ 
is referred to as the \textit{bi-Lipschitz constant} of $F$, denoted by $L(F)$. 
An orientation-preserving bi-Lipschitz homeomorphism $F$ is quasiconformal 
with a maximal dilatation satisfying $K(F) \leq L(F)^2$. The infinitesimal form of the above inequalities can be expressed as
\begin{equation*}
\frac{1}{L}\,\rho_\Omega(z)|dz| \leq \rho_{\Omega'}(F(z))|dF| \leq L\,\rho_\Omega(z)|dz|
\end{equation*}
where $\rho_\Omega$ and $\rho_{\Omega'}$ are the hyperbolic densities of $\Omega$ and $\Omega'$, respectively.

We primarily consider two cases: when $\Omega$ is the unit disk $\mathbb{D}$ with $\partial \Omega=\mathbb{S}$, 
and when $\Omega$ is the upper half-plane $\mathbb{H}$ with $\partial \Omega=\mathbb{R}$. 
Both cases can be treated equivalently, but we use $\mathbb{D}$ only in Section \ref{2} and $\mathbb{H}$ in the remaining sections. 
Most of the theorems presented in this paper have already been discussed in the literature. 
In Sections \ref{2} and \ref{3}, we introduce two widely used quasiconformal extensions: 
the Douady--Earle and the Beurling--Ahlfors extensions. Section \ref{4} contains recent results 
by H. Wei and the author regarding the FKP extension, 
which is a variant of the Beurling--Ahlfors extension. In Section \ref{5}, we explore the utility of the bi-Lipschitz property of quasiconformal maps in the context of arguments concerning Teichm\"uller spaces. Theorem \ref{biLip} 
in Section \ref{6} is a slight generalization of a known result concerning the Ahlfors--Weill extension. 
Finally, in Section \ref{7}, we consider the bi-Lipschitz extension 
in the Euclidean metric of bi-Lipschitz embeddings onto chord-arc curves.

\smallskip
\noindent
\textit{Notation.}
The comparable relations $\gtrsim$ and $\lesssim$ indicate inequalities up to uniformly bounded multiplicative constants. When both relations hold, we denote it by $\asymp$.

\section{The Douady--Earle extension}\label{2}

For an orientation-preserving self-homeomorphism
$f:\mathbb S \to \mathbb S$ of the unit circle,
its average observed at the origin $0$ of the unit disk $\mathbb D$ is given by
$$
\xi_f(0)=\frac{1}{2\pi} \int_0^{2\pi} f(e^{i\theta})d\theta.
$$
For any $w \in \mathbb D$, the average of $f$ observed at $w$ is defined as
$$
\xi_f(w)=\frac{1}{2\pi} \int_0^{2\pi} \gamma_w \circ f(e^{i\theta}) d\theta \quad (\in \mathbb D),
$$
where $\gamma_w(z)=(z-w)/(1-\bar w z)$ is a M\"obius transformation of $\mathbb D$ that
sends $w$ to $0$.

According to Douady and Earle \cite{DE},
the \textit{conformally barycenter} of $f$ is a point $w_0 \in \mathbb D$ such that $\xi_f(w_0)=0$.
This point exists uniquely.
Then, the conformally barycentric extension $E(f)$ of $f$ at $0$ is defined as
$E(f)(0)=w_0$.
For any $z \in \mathbb D$, this extension is defined by $E(f)(z)=E(f \circ \gamma_z^{-1})(0)$.
From this definition, we observe that $E(f):\mathbb D \to \mathbb D$ exhibits the following \textit{conformal naturality}:
$$
E(\gamma_1 \circ f \circ \gamma_2)=E(\gamma_1) \circ E(f) \circ E(\gamma_2)
$$
for any M\"obius transformations $\gamma_1$ and $\gamma_2$ of $\mathbb S$.
Moreover, we can see that $E(f)$ is continuously extended to $f$ on $\mathbb S$.

For an orientation-preserving self-homeomorphism
$f:\mathbb S \to \mathbb S$,
let $\Phi_f:\mathbb D \times \mathbb D \to \mathbb C$ be a real-analytic map defined by
$$
\Phi_f(z,w)=\frac{1}{2\pi} \int_0^{2\pi} \frac{f(e^{i\theta})-w}{1-\bar w f(e^{i\theta})} \cdot \frac{1-|z|^2}{|z-e^{i\theta}|^2}\, d\theta.
$$
Then, $\Phi_f(z,w)=0$ represents the implicit function of $w=F(z)$ for the barycentric extension $F=E(f)$.
Under the normalization $F(0)=0$ (or equivalently $\int_0^{2\pi}f(e^{i\theta})d\theta=0$), it is verified that
$$
|\partial F(0)|^2-|\bar \partial F(0)|^2=\frac{|(\Phi_f)_z(0)|^2-|(\Phi_f)_{\bar z}(0)|^2}{|(\Phi_f)_w(0)|^2-|(\Phi_f)_{\bar w}(0)|^2}
$$
is positive. By the conformal naturality, we see that the Jacobian determinant $J_F(z)=|\partial F(z)|^2-|\bar \partial F(z)|^2$ is positive at every $z \in \mathbb D$.
Hence, $F=E(f)$ is a real-analytic self-diffeomorphism of $\mathbb D$ (\cite[Theorem 1]{DE}).

Additionally, we assume  that $f$ extends continuously to a quasiconformal self-homeo\-morphism of $\mathbb D$.
We say that $f$ is quasisymmetric if this condition is satisfied. Under this assumption,
the family of all $\gamma_1 \circ f \circ \gamma_2$ for any M\"obius transformations 
$\gamma_1$ and $\gamma_2$ of $\mathbb S$
such that $E(\gamma_1 \circ f \circ \gamma_2)(0)=0$ is relatively compact.
Based on this property, the following result is proved in \cite[Theorem 2]{DE}.

\begin{theorem}[Douady--Earle]
Let $f:\mathbb S \to \mathbb S$ be a quasisymmetric homeomorphism.
Then, 
the conformally barycentric extension $F=E(f):\mathbb D \to \mathbb D$ of $f$ is a bi-Lipschitz real-analytic diffeomorphism 
in the hyperbolic metric on $\mathbb D$. In particular, $F$ is quasiconformal.
\end{theorem}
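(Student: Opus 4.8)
Since the preceding discussion already establishes that $F = E(f)$ is a real-analytic self-diffeomorphism of $\mathbb{D}$, the only thing left to prove is the bi-Lipschitz property, that is, uniform upper and lower bounds for the hyperbolic operator norm of $dF$. The plan is to use conformal naturality to transport the infinitesimal estimate at an arbitrary point to the origin, and then to control the derivative at the origin through the relative compactness of the normalized family.

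First, applying conformal naturality with $f=\mathrm{id}$ shows that $E(\gamma)=\gamma$ for every M\"obius transformation $\gamma$ of $\mathbb S$, whence $E(\gamma_1\circ f\circ\gamma_2)=\gamma_1\circ F\circ\gamma_2$. Now fix $z\in\mathbb D$ and choose a M\"obius $\gamma_2$ with $\gamma_2(0)=z$ and a M\"obius $\gamma_1$ with $\gamma_1(F(z))=0$. Then $\tilde f=\gamma_1\circ f\circ\gamma_2$ satisfies $\tilde F(0)=0$ for $\tilde F=E(\tilde f)=\gamma_1\circ F\circ\gamma_2$, so $\tilde f$ lies in the normalized family. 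Because $\gamma_1,\gamma_2$ are hyperbolic isometries, the hyperbolic operator norms of $dF$ at $z$ equal those of $d\tilde F$ at $0$. Since the hyperbolic and Euclidean densities agree up to the fixed constant $\rho_{\mathbb D}(0)$ at the origin and $\tilde F(0)=0$, the two infinitesimal bi-Lipschitz inequalities at $z$ are equivalent to
$$
|\partial \tilde F(0)|+|\bar\partial \tilde F(0)|\le L,\qquad |\partial \tilde F(0)|-|\bar\partial \tilde F(0)|\ge 1/L
$$
holding uniformly over the normalized family.

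Next I would show that $\tilde f\mapsto(\partial\tilde F(0),\bar\partial\tilde F(0))$ is continuous for uniform convergence on $\mathbb S$. Under the normalization $\tilde F(0)=0$, these derivatives are determined by the implicit relation $\Phi_{\tilde f}(z,w)=0$ through the displayed formula for $|\partial\tilde F(0)|^2-|\bar\partial\tilde F(0)|^2$ together with the corresponding expressions for $\partial\tilde F(0)$ and $\bar\partial\tilde F(0)$ furnished by the implicit function theorem. Each of the partial derivatives $(\Phi_{\tilde f})_z(0)$, $(\Phi_{\tilde f})_{\bar z}(0)$, $(\Phi_{\tilde f})_w(0)$, $(\Phi_{\tilde f})_{\bar w}(0)$ is an integral over $\mathbb S$ of a kernel depending continuously on the values $\tilde f(e^{i\theta})$, so dominated convergence gives their continuity in $\tilde f$. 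With continuity in hand, the relative compactness of the normalized family forces $|\partial\tilde F(0)|+|\bar\partial\tilde F(0)|$ to attain a finite maximum and the Jacobian $J_{\tilde F}(0)=|\partial\tilde F(0)|^2-|\bar\partial\tilde F(0)|^2$, already known to be positive, to attain a positive minimum; these two facts yield the upper bound and, combined with the upper bound, the lower bound on $|\partial\tilde F(0)|-|\bar\partial\tilde F(0)|$.

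The crux of the argument is this compactness step, and the delicate point is that positivity of the Jacobian must survive passage to the closure of the normalized family --- equivalently, the denominator $|(\Phi_{\tilde f})_w(0)|^2-|(\Phi_{\tilde f})_{\bar w}(0)|^2$ in the derivative formula must stay bounded away from zero there. This is precisely where quasisymmetry is needed: it ensures that limits of normalized maps are genuine homeomorphisms of $\mathbb S$ rather than degenerate maps collapsing arcs to points, so the implicit function theorem continues to apply and the Jacobian remains positive. Granting this, the resulting constant $L$ depends only on the compact family, hence only on the quasisymmetric distortion of $f$, and the proof is complete.
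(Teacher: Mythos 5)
Your proposal is correct and follows essentially the same route the paper sets up for this theorem: reduce to the origin by conformal naturality, read off the derivative data from the implicit equation $\Phi_f(z,w)=0$ with its positive Jacobian, and get uniform bounds from the relative compactness of the normalized family $\gamma_1\circ f\circ\gamma_2$. The paper merely assembles these ingredients and cites Douady--Earle for the details, and your write-up fills them in faithfully, including the key point that quasisymmetry keeps the closure of the normalized family inside the genuine homeomorphisms so the Jacobian stays bounded away from zero.
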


To this \textit{Douady--Earle extension} $E(f)$, an intrinsic property of the quasisymmetric homeomorphism $f$ on $\mathbb S$ is not directly reflected.
When the complex dilatation $\mu$ of some quasiconformal extension of $f$ to $\mathbb D$ is known,  
the following claim from Cui \cite[Theorem 1]{Cu} provides some information about the complex dilatation of the Douady--Earle extension
in terms of $\mu$.

\begin{proposition}
Suppose that $f:\mathbb S \to \mathbb S$
extends quasiconformally to $\mathbb D$
with complex dilatation $\mu$. Then, the complex dilatation $\tilde \mu$ of $E(f^{-1})^{-1}$ satisfies
$$
|\tilde\mu(\zeta)|^2 \leq C(1-|\zeta|^2)^2 \int_{\mathbb D} \frac{|\mu(z)|^2}{|1-\bar \zeta z|^4}dxdy
$$
for a constant $C>0$ depending only on $\Vert \mu \Vert_\infty$.
\end{proposition}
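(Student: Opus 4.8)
The plan is to use the conformal naturality of $E$ to reduce the inequality to the single point $\zeta=0$, to compute the complex dilatation there from the implicit equation $\Phi_{f^{-1}}=0$, and then to convert the resulting boundary integrals into area integrals of $\mu$ by Stokes' theorem. The first observation is that the weight on the right-hand side is the squared modulus of a Möbius derivative: for $\gamma_\zeta(z)=(z-\zeta)/(1-\bar\zeta z)$ one has $|\gamma_\zeta'(z)|^2=(1-|\zeta|^2)^2/|1-\bar\zeta z|^4$, so the change of variables $z\mapsto\gamma_\zeta(z)$ turns the right-hand integral into $\int_{\mathbb D}|\mu(\gamma_\zeta^{-1}(z))|^2\,dxdy$, which is the area integral of the complex dilatation of $F_1:=\widetilde F\circ\gamma_\zeta^{-1}$, where $\widetilde F$ is the given extension of $f$ with $\mu_{\widetilde F}=\mu$. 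Its boundary values are $f_1:=f\circ\gamma_\zeta^{-1}$. Using the conformal naturality $E(\gamma_1\circ f_1^{-1}\circ\gamma_2)=\gamma_1\circ E(f_1^{-1})\circ\gamma_2$ and the invariance of the modulus of the complex dilatation under pre- and post-composition with Möbius transformations, I would check that $|\tilde\mu(\zeta)|$ equals the modulus of the complex dilatation of $E(f_1^{-1})^{-1}$ at $0$; a further Möbius post-composition, which changes neither $\int_{\mathbb D}|\mu_{F_1}|^2\,dxdy$ nor that modulus, normalizes $f_1$ so that $E(f_1^{-1})(0)=0$. Since inversion preserves the modulus at the fixed point $0$, it suffices to prove the unweighted estimate $|\mu_{E(g_1)}(0)|^2\le C\int_{\mathbb D}|\mu_{F_1}|^2\,dxdy$ for $g_1:=f_1^{-1}$, with $C=C(\|\mu\|_\infty)$.

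To compute $\mu_{E(g_1)}(0)$ I would differentiate $\Phi_{g_1}(z,w)=0$ at $(0,0)$ under the normalization $E(g_1)(0)=0$; the relevant partials are $(\Phi_{g_1})_w=-1$, $(\Phi_{g_1})_{\bar w}=\frac{1}{2\pi}\int_0^{2\pi}g_1^2\,d\theta=:b$, $(\Phi_{g_1})_z=\frac{1}{2\pi}\int_0^{2\pi}g_1(e^{i\theta})e^{-i\theta}\,d\theta=:a_-$ and $(\Phi_{g_1})_{\bar z}=\frac{1}{2\pi}\int_0^{2\pi}g_1(e^{i\theta})e^{i\theta}\,d\theta=:a_+$, and solving the linear system for $\partial E(g_1)(0)$ and $\bar\partial E(g_1)(0)$ yields $\mu_{E(g_1)}(0)=(a_++b\overline{a_-})/(a_-+b\overline{a_+})$. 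Here the decision to extend $f^{-1}$ rather than $f$ is exactly what makes the estimate close: although $a_\pm$ and $b$ are boundary integrals of $g_1=f_1^{-1}$, substituting the boundary correspondence and integrating by parts on $\mathbb S$ rewrites them as boundary integrals of $f_1$ itself, for instance $a_+=-\frac{1}{2\pi i}\int_{\mathbb S}f_1\,dz$. Stokes' theorem together with $\bar\partial F_1=\mu_{F_1}\,\partial F_1$ then gives $a_+=-\frac1\pi\int_{\mathbb D}\mu_{F_1}\,\partial F_1\,dxdy$, and Cauchy--Schwarz with the area bound $\int_{\mathbb D}|\partial F_1|^2\,dxdy=\int_{\mathbb D}J_{F_1}/(1-|\mu_{F_1}|^2)\le \pi/(1-\|\mu\|_\infty^2)$ gives $|a_+|^2\le \frac{1}{\pi(1-\|\mu\|_\infty^2)}\int_{\mathbb D}|\mu_{F_1}|^2\,dxdy$. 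The same mechanism, applied after integrating by parts, controls $|b|$ by a constant multiple of $(\int_{\mathbb D}|\mu_{F_1}|^2\,dxdy)^{1/2}$; since $|a_-|\le 1$, the whole numerator is then $\lesssim(\int_{\mathbb D}|\mu_{F_1}|^2\,dxdy)^{1/2}$, with constant depending only on $\|\mu\|_\infty$.

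What remains is a lower bound for the denominator $|a_-+b\overline{a_+}|$ by a positive constant depending only on $\|\mu\|_\infty$, and this is where quasisymmetry must enter in an essential, non-formula way; I regard it as the main obstacle. I would invoke the Douady--Earle theorem: since the maximal dilatation of $f_1$ is controlled by $\|\mu\|_\infty$, the extension $E(g_1)$ is bi-Lipschitz in the hyperbolic metric with a constant depending only on $\|\mu\|_\infty$ — which itself rests on the relative compactness of the normalized family $\{\gamma_1\circ g_1\circ\gamma_2:E(\gamma_1\circ g_1\circ\gamma_2)(0)=0\}$ — so that $|\partial E(g_1)(0)|=|a_-+b\overline{a_+}|/(1-|b|^2)$ is bounded below, while the same compactness keeps $1-|b|^2$ bounded away from $0$. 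Combining this with the numerator bound gives the estimate at the origin, and undoing the two conformal conjugations restores the factor $(1-|\zeta|^2)^2/|1-\bar\zeta z|^4$. The step I expect to cost the most work is the quadratic coefficient $b$: the integration by parts that converts $\int_0^{2\pi}g_1^2\,d\theta$ into data of $f_1$ produces a winding (logarithmic) boundary term that must be cancelled against the normalization before the Stokes step applies, and throughout one must verify that every constant depends only on $\|\mu\|_\infty$ and not on finer features of $f$.
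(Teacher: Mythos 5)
First, a point of reference: the paper does not prove this proposition at all --- it is quoted verbatim from Cui \cite[Theorem 1]{Cu} --- so there is no in-paper argument to compare against. Judged on its own, your outline is the standard (and, as far as I can tell, essentially Cui's) route: the Möbius change of variables correctly identifies the weight $(1-|\zeta|^2)^2/|1-\bar\zeta z|^4$ as $|\gamma_\zeta'|^2$ and reduces everything to $\zeta=0$ with the normalization $E(g_1)(0)=0$; your partials of $\Phi_{g_1}$ at $(0,0)$ and the resulting formula $\mu_{E(g_1)}(0)=(a_++b\overline{a_-})/(a_-+b\overline{a_+})$ check out; the conversion $a_+=-\tfrac{1}{2\pi i}\int_{\mathbb S}f_1\,dz=-\tfrac1\pi\int_{\mathbb D}\mu_{F_1}\partial F_1\,dxdy$ together with $\int_{\mathbb D}|\partial F_1|^2\le\pi/(1-\Vert\mu\Vert_\infty^2)$ is correct; and the lower bound on the denominator via normal-family compactness of normalized $K$-quasisymmetric maps is the right mechanism (for the constant to depend only on $\Vert\mu\Vert_\infty$ you need compactness over the whole class of such $f$, not just the orbit of one $f$, but that is equally standard).

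The one genuine gap is the coefficient $b=\tfrac{1}{2\pi}\int_0^{2\pi}g_1^2\,d\theta$, which you flag but do not resolve, and the difficulty is sharper than a mere ``winding boundary term.'' The substitution gives $b=\tfrac{1}{2\pi i}\oint_{\mathbb S}\eta^2\,\frac{dF_1}{F_1}$, and since $d\log F_1$ is a closed $1$-form away from the zero $z_0=F_1^{-1}(0)$, Stokes yields
$$
b=z_0^2-\frac{2}{\pi}\int_{\mathbb D}\eta\,\frac{\mu_{F_1}(\eta)\,\partial F_1(\eta)}{F_1(\eta)}\,dxdy .
$$
Two problems now appear. First, the naive Cauchy--Schwarz bound fails here: $\int_{\mathbb D}|\partial F_1|^2/|F_1|^2$ diverges logarithmically at $z_0$ (each dyadic annulus around $z_0$ contributes a bounded amount), so the area term is not directly $\lesssim(\int|\mu_{F_1}|^2)^{1/2}$. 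Second, the term $z_0^2$ is a priori not small at all, and it is controlled only by exploiting the normalization $\tfrac{1}{2\pi}\int g_1\,d\theta=0$: the same argument-principle computation applied to $\oint(\eta-z_0)\,d\log F_1$ gives $|z_0|\lesssim(\int_{\mathbb D}|\mu_{F_1}|^2)^{1/2}$, and then writing $b-z_0^2=\tfrac{1}{2\pi i}\oint(\eta^2-z_0^2)\,d\log F_1$ inserts the factor $\eta-z_0$ that removes the non-integrable singularity and makes Cauchy--Schwarz work. Without this two-step subtraction the estimate for $b$ does not close, so this is the piece you must actually supply; with it, your outline becomes a complete proof.
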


\section{The Beurling--Ahlfors extension and its variant}\label{3}

Let $f:\mathbb R \to \mathbb R$ be a \textit{quasisymmetric} homeomorphism. The intrinsic definition of this mapping is that
it is increasing and there exists a constant $M \geq 1$ such that
$$
\frac{1}{M} \leq \frac{f(x+t)-f(x)}{f(x)-f(x-t)} \leq M
$$
for every $x \in \mathbb R$ and for every $t>0$. The boundary extension of
a quasiconformal self-homeomorphism of $\mathbb H$ fixing $\infty$ satisfies this condition. 
This characterization was proved by Beurling and Ahlfors \cite{BA} as well as its converse. The least value of such a constant $M$ is called
a \textit{quasisymmetry constant} of $f$ and denoted by $M(f)$.

It is easy to see that an increasing homeomorphism $f:\mathbb R \to \mathbb R$ is quasisymmetric if and only if
$f$ satisfies the following \textit{doubling property}:
there exists a constant $\widetilde M > 1$ such that
$$
f(x+2t)-f(x-2t) \leq \widetilde M(f(x+t)-f(x-t))
$$
for every $x \in \mathbb R$ and for every $t>0$. 
This property implies, in particular, that
$$
f(2^n)-f(-2^n) \leq \widetilde M^n(f(1)-f(-1)).
$$
From this, we can obverse that $|f(x)|$ is of polynomial growth as $|x| \to \infty$.

The \textit{Beurling--Ahlfors extension} $F$ of $f$ is defined as follows:
for any $z=x+iy \in \mathbb H$, let $F(z)=U(x,y)+iV(x,y)$, where
\begin{align*}
U(x,y)
&=\frac{1}{2y} \int_{-y}^y f(x-t)dt;\\
V(x,y)&=\frac{1}{2y} \int_{-y}^0 f(x-t)dt-\frac{1}{2y} \int_{0}^y f(x-t)dt\\
&=\frac{1}{2y} \int_{0}^y (f(x+t)-f(x-t))dt>0.
\end{align*}

\begin{theorem}[Beurling--Ahlfors]\label{BA}
For any quasisymmetric homeomorphism $f:\mathbb R \to \mathbb R$,
the map $F$ is a quasiconformal $C^1$-diffeomorphism of $\mathbb H$ onto $\mathbb H$. The maximal dilatation 
$K(F)$ can be estimated only by the quasisymmetry constant $M(f)$. Moreover,
$F$ is bi-Lipschitz with respect to the hyperbolic metric on $\mathbb H$. 
The bi-Lipschitz constant
$L(F)$ can also be estimated only by $M(f)$.
\end{theorem}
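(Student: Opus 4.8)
The plan is to reduce every claim to uniform two-sided comparisons governed by the quasisymmetry constant $M(f)$, working throughout with the explicit partials of $U$ and $V$. First I would note that, being running averages of the continuous increasing function $f$, both $U$ and $V$ are of class $C^1$; writing $V=\frac{1}{2y}(\int_x^{x+y}f-\int_{x-y}^x f)$ lets one differentiate without ever differentiating $f$. Setting $\alpha=f(x+y)-f(x)$, $\beta=f(x)-f(x-y)$ and their averages $A=\frac1y\int_0^y(f(x+t)-f(x))\,dt$, $B=\frac1y\int_0^y(f(x)-f(x-t))\,dt$, a direct computation gives
\[
U_x=\frac{\alpha+\beta}{2y},\quad V_x=\frac{\alpha-\beta}{2y},\quad U_y=\frac{(\alpha-\beta)-(A-B)}{2y},\quad V_y=\frac{(\alpha+\beta)-(A+B)}{2y},
\]
and, after cancellation,
\[
J_F=U_xV_y-U_yV_x=\frac{1}{2y^2}\bigl(\alpha(\beta-B)+\beta(\alpha-A)\bigr).
\]
Monotonicity of $f$ gives $A\le\alpha$ and $B\le\beta$, so $J_F>0$ (strictly, since $f$ is strictly increasing); together with $V>0$ this makes $F$ an orientation-preserving local diffeomorphism of $\mathbb H$ into itself.

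The whole theorem now rests on four estimates, each with a constant depending only on $M(f)$: the quasisymmetry inequality gives $\alpha\asymp\beta$; the doubling property gives $A\asymp\alpha$ and $B\asymp\beta$; and a one-sided doubling estimate such as $f(x-y/2)-f(x-y)\geq\beta/(M+1)$ yields $\beta-B\gtrsim\beta$ and, symmetrically, $\alpha-A\gtrsim\alpha$. Granting these, $V=\tfrac12(A+B)\asymp\alpha\asymp\beta$, while
\[
\frac{\alpha\beta}{2(M+1)\,y^2}\le J_F\le\frac{\alpha\beta}{y^2},
\]
so that $J_F\asymp(V/y)^2$; at the same time each of $U_x^2,U_y^2,V_x^2,V_y^2$ is $\lesssim\alpha^2/y^2\asymp J_F$. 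Feeding this into the identity $K(F)+K(F)^{-1}=(U_x^2+U_y^2+V_x^2+V_y^2)/J_F$ bounds $K(F)$ by a function of $M(f)$ alone, which gives quasiconformality.

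To promote $F$ to a diffeomorphism of $\mathbb H$ onto $\mathbb H$ I would use that $F$ extends continuously to $\overline{\mathbb H}$ with boundary values $f$, a self-homeomorphism of $\mathbb R$, and is a proper local $C^1$-diffeomorphism with positive Jacobian; a degree/monodromy argument then forces global bijectivity. For the bi-Lipschitz assertion, recall the hyperbolic density on $\mathbb H$ is $1/\mathrm{Im}$, so the infinitesimal condition reads $\frac1L\cdot\frac1y\le\frac{s_\pm}{V}\le L\cdot\frac1y$, where $s_+=|\partial F|+|\bar\partial F|$ and $s_-=|\partial F|-|\bar\partial F|$ are the singular values of $DF$. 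Since $s_+s_-=J_F$ and $s_+/s_-=K(F)$ is already bounded, both $s_\pm\asymp\sqrt{J_F}$, and the condition collapses to $y^2J_F/V^2\asymp1$, which was established above. Integrating this density inequality along hyperbolic geodesics then yields the global bi-Lipschitz estimate with $L(F)$ controlled by $M(f)$.

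The main obstacle is the second paragraph: the comparisons $A\asymp\alpha$, $B\asymp\beta$ and, above all, the lower bounds $\beta-B\gtrsim\beta$ and $\alpha-A\gtrsim\alpha$. These are precisely where the hypothesis enters, and the lower bound on $J_F$ they produce is the single delicate point, since it simultaneously keeps $K(F)$ away from $\infty$ and supplies the lower bi-Lipschitz bound; everything else is bookkeeping once quasisymmetry has been converted into uniform control of the averaged increments.
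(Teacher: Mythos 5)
Your proposal is correct and follows essentially the same route as the classical proof the paper cites (Ahlfors, Lehto) and as the paper's own outline for the FKP variant: explicit partials of $U,V$, conversion of quasisymmetry into two-sided bounds on the averaged increments to get $J_F\asymp\alpha\beta/y^2\asymp (V/y)^2$ and hence bounded dilatation, and the identification $|F_z|\asymp \operatorname{Im}F/\operatorname{Im}z$ for the hyperbolic bi-Lipschitz property. The computations of $U_x,V_x,U_y,V_y$, the Jacobian identity, and the dilatation formula all check out, and the one-sided doubling estimates you isolate are indeed the crux.
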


The proof of this theorem can be found in \cite[Section IV.B]{Ah} and \cite[Section I.5]{Le}.

For a function $\varphi$ on $\mathbb R$ in general and for any $y>0$, let
$$
\varphi_{[y]}(x)=\frac{1}{y} \varphi\left(\frac{x}{y} \right).
$$
Denoting the characteristic function of $E \subset \mathbb R$ by $\chi_E$, we define
$$
\phi(x)=\frac{1}{2} \chi_{[-1,1]}(x), \quad \psi(x)=\frac{1}{2} \chi_{[-1,0]}(x)-\frac{1}{2} \chi_{[0,1]}(x).
$$
Then,
the convolutions of $f$ with these functions yield 
\begin{align*}
(f \ast \phi_{[y]})(x)&=\int_{\mathbb R} f(x-t)\phi_{[y]}(t)dt=U(x,y),\\
(f \ast \psi_{[y]})(x)&=\int_{\mathbb R} f(x-t)\psi_{[y]}(t)dt=V(x,y),
\end{align*}
which give the Beurling--Ahlfors extension $F=U+iV$.

We now consider a variant of the Beurling--Ahlfors extension
by replacing the kernels $\phi$ and $\psi$ with
the Gaussian function and its derivative:
$$
\phi(x)=\frac{1}{\sqrt{\pi}} e^{-x^2}; \quad \psi(x)=\phi'(x)=-2x\phi(x).
$$
We then define
$$
U(x,y)=(f \ast \phi_{[y]})(x), \quad V(x,y)=(f \ast \psi_{[y]})(x), \quad F(z)=U(x,y)+iV(x,y)
$$
as before. Since $|f|$ is of polynomial growth, the integrals of the convolution converge absolutely.
This variant was introduced by
Fefferman, Kenig and Pipher \cite{FKP}, and we refer to it as the \textit{FKP extension}.

\begin{remark}
The function
$\phi_{[\sqrt{y}]}(x)=\frac{1}{\sqrt{\pi y}}e^{-x^2/y}$ satisfies the heat equation
$$
\left(\frac{\partial^2}{4\partial x^2}-\frac{\partial}{\partial y}\right)\phi_{[\sqrt{y}]}(x)=0.
$$
\end{remark}

\begin{theorem}[Fefferman--Kenig--Pipher]
Suppose that a quasisymmetric homeomorphism 
$f:\mathbb R \to \mathbb R$ is locally absolutely continuous. Then, the FKP extension $F$ of $f$
is a quasiconformal real-analytic diffeomorphism possessing the properties of the Beurling--Ahlfors extension 
as stated in Theorem \ref{BA}.
\end{theorem}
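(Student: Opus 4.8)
The plan is to reproduce, for the Gauss kernel, the scheme by which Theorem \ref{BA} is proved, the one new gain being that the smoothness of $\phi$ upgrades the regularity from $C^1$ to $C^\infty$. First I would note that $\phi$ and all of its derivatives are Schwartz functions while $|f|$ grows only polynomially (as recorded after the doubling property in Section \ref{3}); hence $U=f\ast\phi_{[y]}$ and $V=f\ast\psi_{[y]}$ may be differentiated under the integral sign arbitrarily often, so $F=U+iV\in C^\infty(\mathbb H)$ and every partial derivative of $F$ is a convergent integral of $f$ against a fixed Schwartz kernel. Consequently each such derivative is a continuous functional of $f$ in the topology of uniform convergence on compact sets, which is what the later compactness argument needs. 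I would then extract the structural identities that replace the explicit Beurling--Ahlfors formulas: since $\psi=\phi'$, scaling gives $\psi_{[y]}=y\,\partial_x\phi_{[y]}$, whence $V=y\,U_x$, while a direct computation (cf. the Remark) gives $\partial_y\phi_{[y]}=\tfrac{y}{2}\partial_x^2\phi_{[y]}$, whence $U_y=\tfrac{y}{2}U_{xx}$. Differentiating these expresses every first-order derivative through $U_x,U_{xx},U_{xxx}$:
\begin{equation*}
U_y=\tfrac{y}{2}U_{xx},\qquad V=yU_x,\qquad V_x=yU_{xx},\qquad V_y=U_x+\tfrac{y^2}{2}U_{xxx}.
\end{equation*}
Using local absolute continuity I would write $U_x=f'\ast\phi_{[y]}$ with $f'\ge 0$, so that $U_x>0$ and $V=yU_x>0$; in particular $F$ maps $\mathbb H$ into $\mathbb H$.

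The first substantive step is positivity of the Jacobian $J_F=U_xV_y-U_yV_x$. Substituting the identities gives $J_F=U_x^2+\tfrac{y^2}{2}(U_xU_{xxx}-U_{xx}^2)$; writing $U_x,U_{xx},U_{xxx}$ as integrals of $f'(s)$ against $\phi_{[y]}(x-s)$ and its $x$-derivatives and symmetrizing in two variables $s,s'$, a pointwise Gaussian kernel identity collapses the algebra to the bilinear form
\begin{equation*}
J_F(x,y)=\frac{1}{y^2}\int_{\mathbb R}\int_{\mathbb R}\phi_{[y]}(x-s)\,\phi_{[y]}(x-s')\,(s-s')^2\,f'(s)\,f'(s')\,ds\,ds',
\end{equation*}
which is manifestly positive. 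Thus $F$ is a local $C^\infty$-diffeomorphism; since $F$ extends continuously to $F|_{\mathbb R}=f$, a homeomorphism of $\mathbb R$, and is proper, the usual degree argument (as for Beurling--Ahlfors) makes $F$ a global diffeomorphism of $\mathbb H$ onto $\mathbb H$.

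For the two quantitative assertions I would exploit the affine equivariance $F_{\alpha f\beta}=\alpha\circ F_f\circ\beta$ for increasing real affine maps $\alpha,\beta$, under which $M(f)$ is invariant and which acts transitively on $\mathbb H$ by hyperbolic isometries. Because the maximal dilatation and the hyperbolic density ratio at a point are invariant under such conjugation, it suffices to bound them at the single point $z=i$, uniformly over the family of $f$ with $M(f)\le M$ normalized by $F(i)=i$. Writing $N=U_x^2+V_x^2+U_y^2+V_y^2=|F_x|^2+|F_y|^2$, one has $K(F)+K(F)^{-1}=N/J_F$, and, using $V=yU_x$ together with the boundedness of $K(F)$, the bi-Lipschitz property is equivalent to the comparison $N(i)\lesssim U_x(i)^2$, the reverse inequality being automatic. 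Both assertions thus reduce to finite upper bounds for $N(i)/J_F(i)$ and $N(i)/U_x(i)^2$. The normalized family is relatively compact in the uniform-on-compacts topology (equicontinuity from the doubling property of Section \ref{3} and the polynomial growth bound, via Arzel\`a--Ascoli), the functionals $N(i),J_F(i),U_x(i)$ are continuous and the latter two are positive throughout; hence these ratios attain finite maxima depending only on $M$. This yields $K(F)\le K(M)<\infty$ and $L(F)\le L(M)$.

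The main obstacle is this last, quantitative, step rather than the positivity itself: the bilinear identity disposes of $J_F>0$ cleanly, but the uniform lower bound on $J_F(i)$, equivalently that $J_F(i)$ cannot degenerate and that $N(i)$ cannot blow up relative to $U_x(i)^2$, rests on compactness of the normalized class and continuity of these functionals. The delicate points are verifying that quasisymmetry with constant at most $M$ survives passage to the limit and that the Gaussian tails dominate the polynomial growth of $f$, so that the functionals pass to the limit by dominated convergence.
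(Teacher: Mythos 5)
Your proposal is essentially correct in its structural part but takes a genuinely different route from the paper on the quantitative estimates, and that route has one soft spot you should repair. The identities $V=yU_x$, $U_y=\tfrac{y}{2}U_{xx}$, $V_x=yU_{xx}$, $V_y=U_x+\tfrac{y^2}{2}U_{xxx}$ agree with the paper's computation, and your bilinear identity
\begin{equation*}
J_F(x,y)=\frac{1}{y^2}\int_{\mathbb R}\int_{\mathbb R}\phi_{[y]}(x-s)\,\phi_{[y]}(x-s')\,(s-s')^2\,f'(s)f'(s')\,ds\,ds'
\end{equation*}
checks out (the symmetrization of $U_xU_{xxx}-U_{xx}^2$ produces the kernel $2(s-s')^2/y^4-2/y^2$, and the $-2/y^2$ term exactly cancels $U_x^2$); it is a cleaner proof of $J_F>0$ than anything in the paper. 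Where you diverge is in the uniform bounds: the paper never invokes compactness. It uses the doubling property of the measure $f'(x)\,dx$ to prove the pointwise comparisons $(f'\ast\phi_{[y]})(x)\asymp\frac{1}{2y}\int_{|x-t|<y}f'(t)\,dt$ and $|(f'\ast\psi_{[y]})(x)|\lesssim\frac{1}{2y}\int_{|x-t|<y}f'(t)\,dt$ directly, with constants depending only on $M(f)$; from these it gets $|U_x|\asymp|V_y|\gtrsim|V_x|\asymp|U_y|$, the lower bound $U_xV_y-U_yV_x\gtrsim U_x^2$, and then $|F_z|\asymp V/y={\rm Im}\,F/{\rm Im}\,z$, which is the bi-Lipschitz statement. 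Your approach trades these explicit kernel estimates for affine equivariance plus a normal-families argument at the single point $i$; that is the Douady--Earle strategy rather than the Fefferman--Kenig--Pipher one, and it is logically sufficient for ``bounds depending only on $M(f)$,'' at the cost of being non-constructive.

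The gap is in the compactness step, and it is not the one you flag. Quasisymmetry with constant at most $M$ does survive locally uniform limits (the three-point condition is closed, and the normalization $U_x(0,1)=V(0,1)=1$ together with doubling keeps $f(1)-f(-1)$ bounded away from $0$ and $\infty$, so the limit is a genuine homeomorphism). What does \emph{not} survive is local absolute continuity: the limit $f_\infty$ of locally absolutely continuous quasisymmetric maps can be singular, so your representations $U_x=f'\ast\phi_{[y]}$ and the bilinear formula for $J_F$, as written with $f'(s)f'(s')\,ds\,ds'$, are meaningless at the limit point, and you cannot conclude $J_F(i)>0$ there as stated. The repair is to rewrite everything against the Stieltjes measure $df$: integration by parts gives $U_x=\int\phi_{[y]}(x-t)\,df(t)>0$ and $J_F=\frac{1}{y^2}\iint\phi_{[y]}(x-s)\phi_{[y]}(x-s')(s-s')^2\,df(s)\,df(s')>0$ for any increasing homeomorphism $f$ of polynomial growth, since $df$ is a nonatomic positive measure not concentrated at a point; these functionals are then continuous along the locally uniform limit (with the dominated-convergence check against the Gaussian tails that you correctly identify), and the argument closes. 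You should also say a word on properness of $F$ (the paper derives $F(z)\to\infty$ from $V=y(f'\ast\phi_{[y]})$ and the doubling estimate) rather than asserting it.
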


\begin{proof}[Outline of Proof] (For detailed proofs, see \cite[Lemma 4.4]{FKP}, \cite[Theorem 3.3]{WM-2}, and \cite[Proposition 3.2]{WM-3}.)
We compute $U_x$ and $V_x$ as follows:
\begin{align*}
U_x&=\frac{\partial}{\partial x}\int_{\mathbb R} f(x-t) \phi_{[y]}(t)dt
=\int_{\mathbb R} f'(x-t) \phi_{[y]}(t)dt=(f' \ast \phi_{[y]})(x);\\
V_x&=\frac{\partial}{\partial x}\int_{\mathbb R} f(x-t) \psi_{[y]}(t)dt
=\int_{\mathbb R} f'(x-t) \psi_{[y]}(t)dt=(f' \ast \psi_{[y]})(x).
\end{align*}
By the doubling property of $f$ (where $f'(x)dx$ is a doubling measure), we have
\begin{equation}\label{basic}
(f' \ast \phi_{[y]})(x) \asymp \frac{1}{2y} \int_{|x-t|<y}f'(t)dt, \quad |(f' \ast \psi_{[y]})(x)| \lesssim \frac{1}{2y} \int_{|x-t|<y}f'(t)dt.
\end{equation}
Furthermore, using the fact that the $y$-derivative corresponds to the second $x$-derivative as 
mentioned in the previous remark, we have
\begin{align*}
U_y&=f \ast \frac{\partial \phi_{[y]}}{\partial y}=f \ast \left(\frac{y}{2}\frac{\partial^2 \phi_{[y]}}{\partial x^2}\right)
=\frac{1}{2}f'\ast \left(y \frac{\partial \phi_{[y]}}{\partial x}\right)=\frac{1}{2}f'\ast \psi_{[y]}=\frac{1}{2}V_x;\\
V_y&=f \ast \frac{\partial \psi_{[y]}}{\partial y}=\frac{1}{2}f'\ast \left(y \frac{\partial \psi_{[y]}}{\partial x}\right)
=U_x+\frac{y^2}{2} f' \ast \frac{\partial^2 \phi_{[y]}}{\partial x^2}.
\end{align*}

From these computations, we observe that $|U_x| \asymp |V_y| \gtrsim |V_x| \asymp |U_y|$. We apply this to
the expression
$$
\frac{1+|\mu_F|^2}{1-|\mu_F|^2}=\frac{|F_z|^2+|F_{\bar z}|^2}{|F_z|^2-|F_{\bar z}|^2}
=\frac{U_x^2+U_y^2+V_x^2+V_y^2}{2(U_xV_y-U_yV_x)}.
$$
We can see that the numerator is comparable to $U_x^2$. 
Moreover, through a more detailed estimate, we can deduce that
the denominator satisfies
$U_xV_y-U_yV_x \gtrsim U_x^2$. Consequently, this fraction is bounded,
and $|\mu_F| \leq k<1$ holds for some constant $k$.
In particular, $F$ is an orientation-preserving local diffeomorphism.
All constants and comparability relations $\asymp$ mentioned above depend only on $M(f)$.

By the property of the heat kernel, we have 
$\lim_{y \to 0}U(x,y)=f(x)$ and $\lim_{y \to 0}V(x,y)=0$. 
Thus, $F(z)$ extends continuously to $f(x)$ on $\mathbb R$.
Moreover, the fact
$\lim_{z \to \infty} F(z)=\infty$ can be seen as follows.
We have that
\begin{equation}\label{V}
V(x,y)=(f \ast \psi_{[y]})(x)=y(f' \ast \phi_{[y]})(x).
\end{equation}
As $y$ tends to $\infty$, \eqref{basic} and \eqref{V} imply that $V(x,y)$ tends to $\infty$.
If $y$ is bounded, then $U(x,y)$ clearly tends to $\infty$ as $|x| \to \infty$.
Combining this with local homeomorphy, a topological argument proves that
$F$ is a global homeomorphism of $\mathbb H$ onto itself.

Regarding the bi-Lipschitz property of $F$, we use the formula
\begin{align*}
|F_z|^2&=\frac{1}{4}(U_x^2+V_x^2+U_y^2+V_y^2)+\frac{1}{2}(U_xV_y-V_xU_y)
\leq \frac{1}{2}(U_x^2+V_x^2+U_y^2+V_y^2).
\end{align*}
From this, we deduce that $|F_z| \asymp U_x=(f'\ast \phi_{[y]})(x)$. Using \eqref{V},
we obtain
$|F_z| \asymp V/y={\rm Im}\,F(z)/{\rm Im}\,z$. Since $F$ is quasiconformal, $|dF|/|dx| \asymp |F_z|$, 
and thus $F$ is
bi-Lipschitz with respect to the hyperbolic metric. The bi-Lipschitz constant $L(f)$ 
also depends only on $M(f)$.
\end{proof}

\section{Application and generalization of FKP extension}\label{4}

The space of Beltrami coefficients $\mu$ on the upper half-plane $\mathbb H$ is denoted by
$$
M(\mathbb H)=\{\mu \in L^\infty(\mathbb H) \mid \Vert \mu \Vert_\infty<1\}.
$$
For $p>1$, we define the space of \textit{$p$-integrable Beltrami coefficients} with respect to the hyperbolic metric on $\mathbb H$ by
$$
M_p(\mathbb H)=\{\mu \in M(\mathbb H) \mid \Vert \mu \Vert_p^p=\int_{\mathbb H} |\mu(z)|^p\frac{dxdy}{(2\,{\rm Im}\,z)^2} <\infty\}.
$$
On the lower half-plane $\mathbb H^*$, the space of Beltrami coefficients and its subspace are defined correspondingly.
For any $\mu \in M_p(\mathbb H)$, it is known that 
the quasiconformal homeomorphism 
$F:\mathbb H \to \mathbb H$ with complex dilatation $\mu$ extends to 
a quasisymmetric homeomorphism $f:\mathbb R \to \mathbb R$ such that
it is locally absolutely continuous and $u=\log f'$ belongs to the real Banach space
$$
B_p(\mathbb R)=\{ u \in L^1_{\rm loc}(\mathbb R) \mid \Vert u \Vert_{B_p}^p=
\int_{\mathbb R}\int_{\mathbb R} 
\frac{|u(s)-u(t)|^p}{|s-t|^2} ds dt<\infty\},
$$
which is taken modulo constant functions.
See \cite[Lemma 3.3]{WM-4} and the references therein.

Conversely, any element of $B_p(\mathbb R)$ induces
a quasisymmetric homeomorphism of $\mathbb R$ that extends quasiconformally to $\mathbb H$
with complex dilatation in $M_p(\mathbb H)$.

\begin{theorem}[\mbox{\cite[Corollary 1.3]{WM-3}}]
For every $u \in B_p(\mathbb R)$ $(p>1)$, let
$f(x)=\int_0^x e^{u(t)}dt$.
Then, $f:\mathbb R \to \mathbb R$ is a quasisymmetric homeomorphism and
its FKP extension $F:\mathbb H \to \mathbb H$ has its complex dilatation 
$\mu_F$ in $M_p(\mathbb H)$.
\end{theorem}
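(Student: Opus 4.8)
The plan is to prove the two assertions separately, spending almost all of the effort on the membership $\mu_F\in M_p(\mathbb H)$, since the quasisymmetry of $f$ is essentially the converse statement recorded just before the theorem. Indeed, because $f'=e^u>0$ and $u\in L^1_{\mathrm{loc}}(\mathbb R)$, the map $f$ is strictly increasing and locally absolutely continuous; and the $B_p$-membership of $u$ yields the doubling property of the measure $e^u\,dx$, which supplies a quasisymmetry constant $M(f)$ and places $f$ in the class to which the FKP extension applies. Thus $F$ is already known to be a quasiconformal $C^\infty$-diffeomorphism with $|\mu_F|\le k<1$; the point is the quantitative decay expressed by $M_p(\mathbb H)$.

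For the quantitative part I would start from the derivative formulas recorded in the outline of the FKP theorem. Writing $F_z$ and $F_{\bar z}$ in terms of $U_x,V_x,U_y,V_y$ and substituting $U_y=\tfrac12 V_x$ together with $V_y=U_x+R$, where $R=\tfrac{y^2}{2}\,f'\ast \partial_x^2\phi_{[y]}$, one obtains $F_z=U_x+\tfrac{R}{2}+\tfrac{i}{4}V_x$ and $F_{\bar z}=-\tfrac{R}{2}+\tfrac{3i}{4}V_x$. Since $|F_z|\asymp U_x\asymp \tfrac1{2y}\int_{|x-t|<y}f'(t)\,dt$ by \eqref{basic}, it suffices to bound
$$
|\mu_F(z)|\lesssim \frac{|V_x(z)|+|R(z)|}{U_x(z)}.
$$
The crucial observation is that both $\psi_{[y]}$ and $\partial_x^2\phi_{[y]}$ have vanishing integral, so $V_x$ and $R$ can be rewritten as $\int\big(f'(x-t)-\langle f'\rangle\big)\,K(t)\,dt$ against rapidly decaying kernels $K$ concentrated at scale $y$. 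Normalizing by $U_x$ and inserting $f'=e^u$ with the elementary comparison $|e^a-1|\lesssim|a|$ for bounded $a$, this identifies $|\mu_F(x+iy)|$ with a normalized mean oscillation of $u$ at scale $y$ about $x$.

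The decisive step is then the integral estimate
$$
\int_{\mathbb H}|\mu_F(z)|^p\,\frac{dx\,dy}{y^2}\ \lesssim\ \int_0^\infty\!\!\int_{\mathbb R}\Big(\frac1y\int_{\mathbb R}|u(x-t)-u_{x,y}|\,w(t/y)\,dt\Big)^{p}\frac{dx\,dy}{y^2}\ \lesssim\ \Vert u\Vert_{B_p}^p,
$$
where $u_{x,y}$ is the relevant average and $w$ a fixed decaying weight. The second inequality is a Littlewood--Paley / mean-oscillation characterization of the homogeneous Besov space that $B_p(\mathbb R)$ is (the diagonal space of smoothness $1/p$); I would invoke the version already developed in the Wei--Matsuzaki papers. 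The main obstacle is exactly this harmonic-analysis equivalence, coupled with the nonlinear passage $f'=e^u$: the linearization $|e^a-1|\asymp|a|$ is valid only where $u$ oscillates boundedly, so on the exceptional set where the oscillation is large one cannot pass from $f'$ to $u$ directly. There, however, $|u(s)-u(t)|^p$ is itself large, and the a priori bound $|\mu_F|\le k<1$ coming from quasiconformality lets one absorb the contribution into the $B_p$-integrand. Reconciling these two regimes uniformly in a single estimate is the technical heart of the argument.
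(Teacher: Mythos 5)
The survey states this theorem with only the citation to \cite{WM-3} and gives no proof, so there is no in-paper argument to compare against; judging your proposal on its own terms, the skeleton is the right one and matches the strategy of the cited reference. Your derivative identities $F_z=U_x+\tfrac12 R+\tfrac{i}{4}V_x$ and $F_{\bar z}=-\tfrac12 R+\tfrac{3i}{4}V_x$ are correct, the reduction via $|F_z|\asymp U_x$ to bounding $(|V_x|+|R|)/U_x$ is sound, the observation that $\psi_{[y]}$ and $\partial_x^2\phi_{[y]}$ annihilate constants is exactly why $\mu_F$ sees only oscillation, and the final square-function-to-$\Vert u\Vert_{B_p}$ inequality is a correct Jensen-plus-Fubini computation once the Gaussian tails are organized into dyadic annuli.

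The genuine gap is the nonlinear passage from oscillation of $f'=e^u$ to oscillation of $u$. The quantity to control is, schematically, $\bigl(\int|e^{u(x-t)}-e^{u_{x,y}}|\,|K_y(t)|\,dt\bigr)\big/\bigl(\int e^{u(x-t)}\phi_{[y]}(t)\,dt\bigr)$, and the failure of $|e^a-1|\lesssim|a|$ is not a dichotomy in the base point $(x,y)$, which is how you propose to resolve it. Even at a point where the mean oscillation $\tfrac1{y}\int_{|t|<y}|u(x-t)-u_{x,y}|\,dt$ is small, $u$ may spike on a set of tiny measure where $e^{u}$ is enormous; such spikes can dominate the numerator while the mean-oscillation integrand you compare against remains small at that $(x,y)$, so the fallback $|\mu_F|\le k$ gives you nothing to absorb it into. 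What closes this hole is exponential integrability: $B_p(\mathbb R)\hookrightarrow \mathrm{BMO}$ together with John--Nirenberg, equivalently the fact that $e^u$ is an $A_\infty$-weight satisfying a reverse H\"older inequality, which bounds the $L^1$-mean of $|e^{u}-e^{u_{x,y}}|$ on a window by $e^{u_{x,y}}$ times an $L^p$-mean oscillation of $u$ on a comparable window. Note also that your opening assertion that $u\in B_p$ makes $e^u\,dx$ doubling is not free --- it is essentially this same $A_\infty$ statement, one of the substantive results of \cite{WM-2} and \cite{WM-3} --- and it is needed twice more: for the quasisymmetry of $f$, and to beat the Gaussian tails $|t|\gg y$ against the growth of the averages of $f'$. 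Until that ingredient is established or explicitly quoted, both halves of the theorem remain open in your write-up.
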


This result can be generalized to the complex-valued setting.
Let $\widetilde B_p(\mathbb R)$ be the complex Banach space of
complex-valued functions containing
$B_p(\mathbb R)$ as the real subspace.
Namely, any element $w \in \widetilde B_p(\mathbb R)$
is represented by $w=u+iv$ for $u, v \in B_p(\mathbb R)$. 
We call $\widetilde B_p(\mathbb R)$ the \textit{$p$-Besov space} on $\mathbb R$.

For any $w \in \widetilde B_p(\mathbb R)$,
we define a map
$\widetilde f^w:\mathbb R \to \mathbb C$ by
$$
\widetilde f^w(x)=\int_0^x e^{w(t)}dt=\int_0^x e^{u(t)}e^{iv(t)}dt.
$$
Furthermore, to define the the complex FKP extension of $\widetilde f^w=\widetilde f$, we set
$$
\widetilde U(x,y)=(\widetilde f \ast \phi_{[|y|]})(x), \quad \widetilde V(x,y)=(\widetilde f \ast \psi_{[|y|]})(x)  
$$
for $x \in \mathbb R$ and $y \neq 0$, and set
\begin{align*}
\widetilde F_{\mathbb H}(z)&=\widetilde U(x,y)+i\widetilde V(x,y) \quad (y>0);\\
\widetilde F_{\mathbb H^*}(z)&=\widetilde U(x,y)-i\widetilde V(x,y) \quad (y<0).
\end{align*}
Then, $\widetilde F^w=\widetilde F$ for
$w \in \widetilde B_p(\mathbb R)$ is given by
$$
\widetilde F(z) = \left\{
\begin{array}{ll}
\widetilde F_{\mathbb H}(z) & (z \in \mathbb H)\\
\widetilde f(z) & (z \in \mathbb R)\\
\widetilde F_{\mathbb H^*}(z) & (z \in \mathbb H^*),
\end{array}
\right. \qquad \qquad
$$
which is not necessarily a quasiconformal homeomorphism of $\mathbb C$ for every $w \in \widetilde B_p(\mathbb R)$.

\begin{theorem}[\mbox{\cite[Theorem 4.5]{WM-3}}]
There exists a neighborhood $N$ of $B_p(\mathbb R)$ in 
$\widetilde B_p(\mathbb R)$ such that
for every $w \in N$, $\widetilde f^w$ is a quasisymmetric embedding and $\widetilde F^w$ is a quasiconformal self-homeomorphism of $\mathbb C$ with
$\mu_{\widetilde F_{\mathbb H}} \in M_p(\mathbb H)$ and $\mu_{\widetilde F_{\mathbb H^*}} \in M_p(\mathbb H^*)$.
Moreover, the correspondence $w \mapsto (\mu_{\widetilde F_{\mathbb H}},\mu_{\widetilde F_{\mathbb H^*}})$ gives
a holomorphic map $\Lambda:N \to M_p(\mathbb H) \times M_p(\mathbb H^*)$.
\end{theorem}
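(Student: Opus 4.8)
The plan is to establish the theorem in two logically separate pieces, corresponding to its two assertions: first, that for $w$ in a suitable neighborhood $N$ of $B_p(\mathbb R)$ the map $\widetilde f^w$ is a quasisymmetric embedding and $\widetilde F^w$ is a quasiconformal self-homeomorphism of $\mathbb C$ with dilatations in the appropriate $M_p$ spaces; and second, that the assignment $w \mapsto (\mu_{\widetilde F_{\mathbb H}}, \mu_{\widetilde F_{\mathbb H^*}})$ is holomorphic as a map into $M_p(\mathbb H) \times M_p(\mathbb H^*)$. The guiding principle is that everything has been arranged to be a \emph{perturbation} of the real case: at $w = u \in B_p(\mathbb R)$ the previous theorem (\cite[Corollary 1.3]{WM-3}) already guarantees that $\widetilde F^u$ is the FKP extension of a genuine quasisymmetric homeomorphism with $\mu_{\widetilde F_{\mathbb H}} \in M_p(\mathbb H)$, and the two half-plane extensions glue continuously across $\mathbb R$. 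So the strategy is to show these good properties survive, and vary holomorphically, under small complex displacements $v = \operatorname{Im} w$.

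\emph{Construction of $N$ and the qc property.} First I would compute the complex dilatation $\mu_{\widetilde F_{\mathbb H}}$ directly from the convolution formulas, writing $\widetilde F_z = \tfrac12(\widetilde U_x - i\widetilde V_x) + \tfrac{i}{2}(\widetilde V_y + i \widetilde U_y)$ and the analogous expression for $\widetilde F_{\bar z}$, exactly as in the outline proof of the real FKP theorem but now with the complex-valued kernel convolutions $\widetilde U = \widetilde f \ast \phi_{[y]}$, $\widetilde V = \widetilde f \ast \psi_{[y]}$. Since $\widetilde f^{\,w\,\prime}(x) = e^{u(x)}e^{iv(x)}$, the derivatives of the extension are driven by convolutions of $e^u e^{iv}$ against the heat kernel and its $x$-derivative. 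At $v \equiv 0$ the estimate $|\mu_{\widetilde F_{\mathbb H}}| \le k < 1$ holds by the real theorem; the key analytic point is that $\mu_{\widetilde F_{\mathbb H}}(z)$ depends continuously — indeed, as I argue below, holomorphically — on $w$ in a way that is \emph{locally uniform in $z$ over $\mathbb H$}. Hence there is a $\widetilde B_p$-neighborhood $N$ on which $\Vert \mu_{\widetilde F_{\mathbb H}} \Vert_\infty \le k' < 1$ uniformly, and symmetrically for $\mathbb H^*$. With a uniform dilatation bound in hand, local injectivity (nonvanishing Jacobian) and the boundary/infinity behavior carry over as in the real outline, and a topological degree argument upgrades $\widetilde F^w$ to a global self-homeomorphism of $\mathbb C$; restricting to $\mathbb R$ gives the quasisymmetric embedding. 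That the dilatations lie in $M_p$ rather than merely $M$ follows by reproving the $M_p$-integrability estimate of the earlier theorem with the harmless modulating factor $e^{iv}$, whose presence does not disturb the doubling-type bounds \eqref{basic} because $|e^{iv}| = 1$; only $e^u$ governs the size of $\widetilde f^{\,\prime}$.

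\emph{Holomorphy of $\Phi$.} The cleanest route is to exhibit $w \mapsto \mu_{\widetilde F_{\mathbb H}}(w)$ as the composition of holomorphic maps between complex Banach spaces and then invoke that a map into a Banach space which is locally bounded and weakly (or Gateaux) holomorphic is holomorphic. The map $w \mapsto \widetilde f^{\,w\,\prime} = e^{w}$ is holomorphic from $\widetilde B_p(\mathbb R)$ into an appropriate function space because $w \mapsto e^{w}$ is entire and $B_p$ is a Banach algebra up to the relevant estimates; each subsequent operation — convolution with the fixed kernels $\phi_{[y]}, \psi_{[y]}$, forming the algebraic combinations $\widetilde U_x, \widetilde V_x, \widetilde U_y, \widetilde V_y$, and finally taking the ratio $\widetilde F_{\bar z}/\widetilde F_z$ — is either a bounded linear operator or, in the case of the ratio, a holomorphic operation on the open set where $\widetilde F_z \ne 0$, which is precisely where the uniform bound $|\mu| \le k' < 1$ places us. Composing, $\Phi$ is holomorphic into $M_p(\mathbb H) \times M_p(\mathbb H^*)$.

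\emph{Main obstacle.} I expect the genuine difficulty to be the uniform-in-$z$ control needed for both conclusions simultaneously: showing that the dilatation bound $|\mu_{\widetilde F_{\mathbb H}}(z)| \le k' < 1$ and the $M_p$-norm estimate hold \emph{uniformly} as $z$ ranges over all of $\mathbb H$, not just on compact subsets, for every $w$ in a fixed neighborhood $N$. The real FKP estimates rely delicately on the doubling property of the measure $f'(x)\,dx$, and with $f' = e^u e^{iv}$ the relevant measure is complex, so the comparison $|\widetilde F_z| \asymp (|e^u| \ast \phi_{[y]})$ must be re-established by controlling the oscillatory factor $e^{iv}$ — one must verify that the lower bound $\widetilde U_x \widetilde V_y - \widetilde U_y \widetilde V_x \gtrsim |\widetilde F_z|^2$ from the outline is not destroyed by the imaginary part, which is exactly what forces us to restrict to a neighborhood of the real slice rather than all of $\widetilde B_p(\mathbb R)$. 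Once this uniform bound is secured, the holomorphy argument is essentially formal.
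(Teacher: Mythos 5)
This survey states the theorem as a quotation of \cite[Theorem 4.5]{WM-3} and supplies no proof of it, so there is nothing in the paper itself to compare your argument against; I can only measure your plan against the strategy of the cited source, with which it is broadly consistent: perturb off the real slice, where \cite[Corollary 1.3]{WM-3} already gives everything, control the dilatation by re-establishing the Jacobian lower bound in the presence of the oscillatory factor $e^{iv}$, and obtain holomorphy of $\Phi$ from local boundedness plus Gateaux holomorphy. You have also correctly identified the genuine analytic crux, namely that the lower bound $\widetilde U_x\widetilde V_y-\widetilde U_y\widetilde V_x\gtrsim|\widetilde F_z|^2$ must survive the complex modulating factor, which is exactly what forces the restriction to a neighborhood of $B_p(\mathbb R)$.

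That said, three points in your write-up are either wrong or too thin to stand. First, the claim that ``$B_p$ is a Banach algebra up to the relevant estimates'' is false: $B_p(\mathbb R)$ is a critical Besov-type space whose elements are generally unbounded (they embed into BMO, not $L^\infty$), so $w\mapsto e^w$ does not act by algebra operations on $B_p$; the control of $e^{u+iv}$ has to go through BMO/John--Nirenberg and $A_\infty$-weight estimates, and the holomorphy of $w\mapsto\mu$ must be argued pointwise in $z$ (each $\mu(z)$ is an explicit holomorphic function of finitely many convolution integrals, each entire in $w$ along complex lines) combined with the local $M_p$-boundedness, rather than by composing maps through a chain of Banach algebras. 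Second, there can be no single $k'<1$ valid on all of $N$: the dilatation bound for the real FKP extension degenerates as the quasisymmetry constant of $f^u$ grows along $B_p(\mathbb R)$, so the neighborhood must be assembled pointwise (a radius depending on $u$) and the uniformity you need is only in $z$ for each fixed $w$. Third, the globalization step is underdeveloped: even with both half-plane pieces being quasiconformal embeddings, concluding that the glued map is a self-homeomorphism of $\mathbb C$ requires showing that $\widetilde f^w$ is injective and that the two images lie on opposite sides of the curve $\widetilde f^w(\mathbb R)$ (local homeomorphy across the seam), plus properness at infinity; ``a topological degree argument'' does not by itself resolve the apparent circularity between injectivity of $\widetilde f^w$ and global injectivity of $\widetilde F^w$. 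None of these invalidates the architecture, but each needs a real argument before the plan becomes a proof.
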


\section{Effects of bi-Lipschitz property}\label{5}
In this section, we illustrate how the bi-Lipschitz property can be utilized in the theory of quasiconformal mappings.

\subsection{Integrability of the complex dilatation of the composition of quasiconformal maps}

For $\mu \in M(\mathbb H)$, let $F^\mu$ denote the quasiconformal self-homeomorphism of $\mathbb H$ with
the normalization fixing $0,1,\infty$.
For any Beltrami coefficients
$\mu, \nu \in M(\mathbb H)$, the complex dilatation of $F^\nu \circ (F^{\nu})^{-1}$ is denoted by
$\mu \ast \nu^{-1}$. Specifically, it is represented as
\begin{equation}\label{composition}
\mu \ast \nu^{-1}(z)=\frac{\mu(\zeta)-\nu(\zeta)}{1-\overline{\nu(\zeta)}\mu(\zeta)}\cdot\frac{\partial F^{\nu}(\zeta)}{\overline{\partial F^{\nu}(\zeta)}} \qquad (z=F^{\nu}(\zeta)).
\end{equation}

For $\mu, \nu \in M_p(\mathbb H)$ $(p \geq 1)$, we compute the norm of $\mu \ast \nu^{-1}$ as follows:
\begin{align*}
\Vert \mu \ast \nu^{-1} \Vert_p^p&=\int_{\mathbb H}|\mu \ast \nu^{-1}(z)|^p \frac{1}{(2\,{\rm Im}\,z)^2}dxdy
=\int_{\mathbb H}
\left|\frac{\mu(\zeta)-\nu(\zeta)}{1-\overline{\nu(\zeta)} \mu(\zeta)}\right|^p \frac{J_{F^\nu}(\zeta)}{(2\,{\rm Im}\,F^{\nu}(\zeta))^2}d\xi d\eta,
\end{align*}
where $J_{F^\nu}$ stands for the Jacobian determinant of $F^\nu$.
Here, if $F^\nu$ is bi-Lipschitz with respect to the hyperbolic metric on $\mathbb H$, then
\begin{align*}
\frac{J_{F^\nu}(\zeta)}{(2\,{\rm Im}\,F^{\nu}(\zeta))^2}&=\frac{|\partial{F^\nu}(\zeta)|^2-|\bar \partial{F^\nu}(\zeta)|^2}{(2\,{\rm Im}\,F^{\nu}(\zeta))^2}
\asymp \frac{|\partial{F^\nu}(\zeta)|^2}{(2\,{\rm Im}\,F^{\nu}(\zeta))^2} \asymp \frac{1}{(2\,{\rm Im}\,\zeta)^2}.
\end{align*}
The comparability $\asymp$ depends only on $K(F^\nu)$ and $L(F^\nu)$.
Thus, there is a constant $C \geq 1$ depending only on $\Vert \nu \Vert_\infty$ and $L(F^\nu)$ such that
\begin{equation}\label{norm}
\frac{1}{C}\Vert \mu \ast \nu^{-1} \Vert_p^p \leq \int_{\mathbb H}|\mu(\zeta)-\nu(\zeta)|^p \frac{d\xi d\eta}{(2\,{\rm Im}\,\zeta)^2}
=\Vert \mu-\nu \Vert_p^p \leq C\Vert \mu \ast \nu^{-1} \Vert_p^p.
\end{equation}
These inequalities establish the relationship between the norm topology on $M_p(\mathbb H)$ 
and the Teichm\"uller topology.

By setting $\mu=0$ in \eqref{norm}, we have $\Vert \nu^{-1} \Vert_p \asymp \Vert \nu \Vert_p$. This implies that if $F^\nu$ is bi-Lipschitz,
then the complex dilatation $\nu^{-1}$ of the inverse $(F^\nu)^{-1}$ belongs to $M_p(\mathbb H)$.

\subsection{Quasiconformal reflection}

Let $\nu \in M(\mathbb H)$ be a complex dilatation of a quasiconformal self-homeomorphism of
$\mathbb H$ that is bi-Lipschitz with respect to the hyperbolic metric on $\mathbb H$.
We consider a quasiconformal self-homeomorphism $F$ of $\mathbb C$ that is conformal on 
$\mathbb H^*$ and has complex dilatation $\nu$ on $\mathbb H$. Then,
$F|_{\mathbb H}:\mathbb H \to F(\mathbb H)$ is a quasiconformal homeomorphism that is bi-Lipschitz with respect to
the hyperbolic metrics on $\mathbb H$ and its image $\Omega=F(\mathbb H)$.

Let $\Omega^*=F(\mathbb H^*)$. The quasiconformal reflection $j:\Omega \to \Omega^*$ with respect to $\partial \Omega$
is defined as
$$
j(z)=F(\overline{F^{-1}(z)}) \quad (z \in \Omega).
$$
This is an orientation-reversing quasiconformal homeomorphism that is bi-Lipschitz in hyperbolic metrics on $\Omega$ and $\Omega^*$.
We denote the hyperbolic densities on 
$\Omega$ and $\Omega^*$ by $\rho_{\Omega}$ and $\rho_{\Omega^*}$, respectively.
Then, the bi-Lipschitz property combined with the anti-quasiconformality implies that there exists a constant $L \geq 1$ such that
\begin{equation}\label{omega1}
\frac{1}{L}\, \rho_{\Omega}(z) \leq |\bar \partial j(z)|\rho_{\Omega^*}(j(z)) \leq L\, \rho_{\Omega}(z)
\end{equation}
for every $z \in \Omega$.

Let us take any $\zeta \in \mathbb H$ and its complex conjugate $\bar \zeta \in \mathbb H^*$.
If $z=F(\zeta)$, then $j(z)=F(\bar \zeta)$. 
We choose the nearest point $w$ in $\partial \Omega$ from $z$ and let $\xi=F^{-1}(w) \in \mathbb R$.
Using the inequality $|\zeta-\xi| \geq |\zeta-\bar \zeta|/2$ and the quasisymmetry of the quasiconformal homeomorphism $F$ on $\mathbb C$ (see \cite[Theorem 3.5.3]{AIM}),
we obtain $|z-w| \gtrsim |z-j(z)|$. Similarly, taking $\xi^*=F^{-1}(w^*) \in \mathbb R$ for the nearest point $w^*$ in $\partial \Omega^*$ from $j(z)$,
we have $|j(z)-w^*| \gtrsim |z-j(z)|$ by $|\bar \zeta-\xi^*| \geq |\zeta-\bar \zeta|/2$ and the quasisymmetry of $F$.
Finally, by using $\rho_\Omega^{-1}(z) \asymp |z-w|$ and $\rho_{\Omega^*}^{-1}(j(z)) \asymp |j(z)-w^*|$, we conclude that
\begin{equation}\label{omega2}
|z-j(z)|^2 \asymp \rho_\Omega^{-1}(z)\rho_{\Omega^*}^{-1}(j(z)).
\end{equation}
The combination of \eqref{omega1} and \eqref{omega2} asserts that
there is a constant $C \geq 1$ with the same dependence as before such that
$$
\frac{1}{C}\, \rho_{\Omega^*}^{-2}(j(z)) \leq
|z-j(z)|^2 |\bar \partial j(z)| \leq C\, \rho_{\Omega^*}^{-2}(j(z)).
$$
This inequality is used for the construction of a local inverse of the Bers projection as the generalized Ahlfors--Weill extension.

Missing proofs and related results can be found in Earle and Nag \cite{EN}, and also in \cite[Section IV.D]{Ah}, \cite[Section II.4]{Le},
and Sugawa \cite{Su}.

\section{An explicit representation of the Ahlfors--Weill extension}\label{6}

A sufficient condition for a conformal mapping $g:\mathbb H^* \to \mathbb C$ 
on the lower half-plane ${\mathbb H}^*$ to be extendable to a quasiconformal homeomorphism of
$\mathbb C$ is given by the \textit{Schwarzian derivative} $S_g$ of $g$. Here,
$$
S_g=(P_g)'-\frac{1}{2}(P_g)^2,
$$
where $P_g=(\log g')'$ is the \textit{pre-Schwarzian derivative}. The hyperbolic $L^\infty$-norms for $P_g$ and $S_g$ are defined as follows:
$$
\Vert P_g \Vert_\infty=\sup_{z \in \mathbb H^*} 2\,|{\rm Im}\,z||P_g(z)|;\quad \Vert S_g \Vert_\infty=\sup_{z \in \mathbb H^*} 4\,|{\rm Im}\,z|^2|S_g(z)|.
$$

\begin{theorem}[Ahlfors--Weill]\label{AW}
If a conformal mapping $g:\mathbb H^* \to \mathbb C$ $(\lim_{z \to \infty}g(z)=\infty)$ satisfies
$\Vert S_g \Vert_\infty<2$, then there is a quasiconformal extension $G$ of $g$ to
$\mathbb C$ such that its complex dilatation on $\mathbb H$ is given by
\begin{equation}\label{harmonic}
\mu_G(z)=-\frac{1}{2}\,|z-\bar z|^2 S_g(\bar z).
\end{equation}
\end{theorem}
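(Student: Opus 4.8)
The plan is to construct $G$ explicitly by attaching to each $z\in\mathbb H$ the osculating M\"obius transformation of $g$ at the reflected point $\bar z\in\mathbb H^*$. The M\"obius transformation agreeing with $g$ to second order at $\zeta\in\mathbb H^*$ is
\[
M_\zeta(w)=g(\zeta)+\frac{g'(\zeta)\,(w-\zeta)}{1-\tfrac12 P_g(\zeta)\,(w-\zeta)},
\]
and I would set $G(z)=M_{\bar z}(z)$ on $\mathbb H$, keeping $G=g$ on $\overline{\mathbb H^*}$. As $z\to x\in\mathbb R$ one has $z-\bar z\to 0$, so $G(z)-g(\bar z)=g'(\bar z)(z-\bar z)/D\to 0$ (with $D$ as below), which makes $G$ continuous across $\mathbb R$; the rigorous matching of boundary values is folded into the homeomorphy step. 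It is convenient to record the equivalent form $G=E_1/E_2$, where $E_j(z)=\eta_j(\bar z)+(z-\bar z)\eta_j'(\bar z)$ and $\eta_1,\eta_2$ solve the associated linear ODE $\eta''+\tfrac12 S_g\,\eta=0$ on $\mathbb H^*$, normalized so that their Wronskian equals $1$ and $g=\eta_1/\eta_2$; then $g'=\eta_2^{-2}$ and $P_g=-2\eta_2'/\eta_2$.

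Next I would compute the Beltrami coefficient by differentiating $G$, treating $\bar z$ as the anti-holomorphic variable so that $\partial_z\bar z=0$ and $\partial_{\bar z}\bar z=1$. Writing $D=1-\tfrac12 P_g(\bar z)(z-\bar z)$, a direct calculation gives $\partial_z G=g'(\bar z)/D^2$, while the $\partial_{\bar z}$-derivative produces a factor $\tfrac12 P_g'-\tfrac14 P_g^2$ which, by the defining relation $P_g'=S_g+\tfrac12 P_g^2$, collapses to $\tfrac12 S_g$. This yields
\[
\partial_{\bar z}G=\frac{\tfrac12\,(z-\bar z)^2\,S_g(\bar z)\,g'(\bar z)}{D^2},\qquad \mu_G=\frac{\partial_{\bar z}G}{\partial_z G}=\tfrac12\,(z-\bar z)^2\,S_g(\bar z).
\]
Since $z-\bar z=2i\,{\rm Im}\,z$ gives $(z-\bar z)^2=-|z-\bar z|^2$, this is exactly \eqref{harmonic}. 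The quasiconformality bound is then immediate: $|\mu_G(z)|=\tfrac12|z-\bar z|^2|S_g(\bar z)|=\tfrac12\bigl(4|{\rm Im}\,\bar z|^2|S_g(\bar z)|\bigr)\le\tfrac12\Vert S_g\Vert_\infty<1$, so $\Vert\mu_G\Vert_\infty\le\tfrac12\Vert S_g\Vert_\infty<1$.

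The remaining and genuinely harder point is that $G$ is a global homeomorphism onto the complementary domain, for which I must show that $D$ (equivalently $E_2$) never vanishes on $\mathbb H$. From $\partial_z G=g'(\bar z)/D^2$ and $|\mu_G|<1$, the Jacobian $|\partial_z G|^2(1-|\mu_G|^2)$ is positive wherever $D\neq 0$, so once non-vanishing is established $G$ is a real-analytic local diffeomorphism. The non-vanishing of $E_2$ is exactly where the strict inequality $\Vert S_g\Vert_\infty<2$ enters: I expect this to be the main obstacle, and I would handle it by a Nehari-type estimate on the solutions of $\eta''+\tfrac12 S_g\,\eta=0$ (a comparison/Gronwall argument governed by the hyperbolically weighted bound $4|{\rm Im}\,\zeta|^2|S_g(\zeta)|<2$) guaranteeing that the linear approximant $E_2(z)=\eta_2(\bar z)+(z-\bar z)\eta_2'(\bar z)$ stays away from $0$. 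Granting this, $G$ is a proper local homeomorphism from $\mathbb H$ into $\widehat{\mathbb C}\setminus g(\overline{\mathbb H^*})$, and a degree/properness argument (the boundary values $G|_{\mathbb R}=g|_{\mathbb R}$ parametrizing $\partial g(\mathbb H^*)$) upgrades it to a bijection onto that domain. Matching with the conformal $g$ on $\mathbb H^*$ then produces the desired orientation-preserving quasiconformal self-homeomorphism of $\widehat{\mathbb C}$ with the stated complex dilatation; alternatively, the homeomorphy can be deduced from the quasiconformal-reflection machinery of Section \ref{5}.
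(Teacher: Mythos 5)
Your explicit formula for $G$ is precisely the one the paper uses (equation \eqref{formula}), your identity $\partial_z G=g'(\bar z)/D^2$ is the paper's \eqref{Gz}, and your computation of $\partial_{\bar z}G$ and of $\mu_G=\tfrac12(z-\bar z)^2S_g(\bar z)=-\tfrac12|z-\bar z|^2S_g(\bar z)$, together with the bound $\Vert\mu_G\Vert_\infty\le\tfrac12\Vert S_g\Vert_\infty<1$, is correct and is exactly what the paper means by ``direct computation of the complex dilatation.'' One simplification: the non-vanishing of $D=1-\tfrac12(z-\bar z)P_g(\bar z)$ does not require a Nehari/Gronwall analysis of the ODE $\eta''=-\eta S_g/2$; as the paper records in the proof of Theorem \ref{biLip}, $\Vert S_g\Vert_\infty<2$ forces $\Vert P_g\Vert_\infty<2$, i.e.\ $2\,|{\rm Im}\,\bar z|\,|P_g(\bar z)|<2$, whence $|\tfrac12(z-\bar z)P_g(\bar z)|<{\rm Im}\,z/|{\rm Im}\,\bar z|=1$ and $D$ stays in the disk $|D-1|<1$.

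The genuine gap is the global homeomorphy. Non-vanishing of $D$ only makes $G$ a local diffeomorphism of $\mathbb H$, and your degree/properness argument presupposes two things it does not establish: that $G(\mathbb H)$ is disjoint from $g(\mathbb H^*)$ (so that $G$ maps into the complementary domain at all), and that the glued map is injective near $\mathbb R$ --- it is continuous across $\mathbb R$ but not known to be a local homeomorphism there, so a covering/degree argument on $\widehat{\mathbb C}$ does not apply off the shelf. This is exactly the point the paper outsources: Gumenyuk--Hotta show that $g_t(z)=g(z-it)-2tg'(z-it)/(1+tP_g(z-it))$ is a chordal Loewner chain, so each $g_t$ is univalent and the images increase with $t$, and tracing the boundary values $G(x+iy)=g_y(x)$ sweeps out the complement of $g(\mathbb H^*)$ injectively. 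The classical alternative (Ahlfors--Weill's ``uniqueness theorem for Beltrami equations'') runs the logic in the other direction: solve the Beltrami equation for the explicitly given $\mu$ of \eqref{harmonic} to obtain a genuine quasiconformal self-homeomorphism $w^\mu$ of $\widehat{\mathbb C}$, check that its conformal restriction to $\mathbb H^*$ has Schwarzian derivative $S_g$, and conclude that $w^\mu$ agrees with your formula after post-composition with a M\"obius transformation. Either route closes the gap; as written, your sketch does not.
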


In the original proof by Ahlfors and Weill \cite{AhW} (see \cite[Section VI.C]{Ah}), 
the above quasiconformal extension 
was expressed in terms of the linearly independent solutions $\eta_1$ and $\eta_2$ of the ordinary differential equation
$\eta''=-\eta S_g/2$,
and its complex dilatation was computed explicitly.
In the case of $\mathbb D$, this quasiconformal extension itself is represented explicitly in 
Chuaqui and Osgood \cite[p.666]{CO}. Then, applying the Cayley transformation $\mathbb H \to \mathbb D$, we obtain the formula for $G$ on $\mathbb H$:
\begin{equation}\label{formula}
G(z)=g(\bar z)+\frac{(z-\bar z)g'(\bar z)}{1-\frac{1}{2}(z-\bar z)P_g(\bar z)} \quad (z \in \mathbb H).
\end{equation}

Using the theory of Loewner chains, this formula for $G$
can also be obtained.
Indeed, Gumenyuk and Hotta \cite[Proposition 5.5]{GH} considered the chordal Loewner equation on the half-plane and proved that
\begin{equation*}\label{family0}
g_t(z)=g(z-it)-\frac{2tg'(z-it)}{1+tP_g(z-it)} \quad (z \in \mathbb H^*, \ t \geq 0)
\end{equation*}
produces a family of univalent holomorphic functions on $\mathbb H$, which is a chordal Loewner chain
starting from $g=g_0$. By tracing the boundary values of the Loewner chain
$\{g_t\}$ on $\mathbb R$, we obtain the quasiconformal extension. More precisely, 
\eqref{formula} is given by $G(z)=g_y(x)$ for $z=x+iy \in \mathbb H$.
By directly computating the complex dilatation of $G$, we obtain the formula for
$\mu_G$ as in \eqref{harmonic}. 

For a conformal homeomorphism $g$ of $\mathbb H^*$ with $\Vert S_g \Vert_\infty<2$,
the Beltrami coefficient $\mu \in M(\mathbb H)$ given in \eqref{harmonic}
is called \textit{harmonic}.

\begin{theorem}\label{biLip}
For every harmonic Beltrami coefficient $\mu \in M(\mathbb H)$,
the quasiconformal self-homeomorphism $F:\mathbb H \to \mathbb H$ with complex dilatation $\mu$
is a bi-Lipschitz real-analytic diffeo\-morphism with respect to the hyperbolic metric on $\mathbb H$.
\end{theorem}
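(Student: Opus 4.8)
The plan is to settle the real-analytic diffeomorphism assertions directly and then reduce the bi-Lipschitz property to a single pointwise comparison that I close using the quasiconformal-reflection inequality \eqref{omega2}. Since $g$ is holomorphic on $\mathbb H^*$, the function $z\mapsto S_g(\bar z)$ is anti-holomorphic on $\mathbb H$, so by \eqref{harmonic} the coefficient $\mu(z)=-\tfrac12|z-\bar z|^2 S_g(\bar z)$ is real-analytic, and $\Vert S_g\Vert_\infty<2$ gives $\Vert\mu\Vert_\infty=\tfrac12\Vert S_g\Vert_\infty=:k<1$. The normalized solution $F$ of $\bar\partial F=\mu\,\partial F$ thus has real-analytic coefficient, hence $F$ is real-analytic by elliptic regularity; and $J_F=|\partial F|^2(1-|\mu|^2)>0$ everywhere, so $F$ is a real-analytic diffeomorphism. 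This leaves only the bi-Lipschitz claim.

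Next I would reduce to the Ahlfors--Weill extension. Let $G$ be the extension of Theorem \ref{AW}: it is conformal on $\mathbb H^*$, equal to $g$ there, and carries the \emph{same} dilatation $\mu$ on $\mathbb H$. Since $F$ and $G|_{\mathbb H}$ have identical complex dilatation on $\mathbb H$, the composition $h:=G\circ F^{-1}:\mathbb H\to\Omega:=G(\mathbb H)$ is conformal, hence a hyperbolic isometry onto the quasidisk $\Omega$; therefore $F=h^{-1}\circ G|_{\mathbb H}$ is bi-Lipschitz from $(\mathbb H,\rho_{\mathbb H})$ to itself if and only if $G|_{\mathbb H}:\mathbb H\to\Omega$ is bi-Lipschitz with respect to $\rho_{\mathbb H}$ and $\rho_\Omega$. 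As $G$ is quasiconformal with $\Vert\mu\Vert_\infty\le k$, one has $|dG|\asymp|\partial G|\,|dz|$, and since $\Omega$ is simply connected, $\rho_\Omega(w)\asymp 1/{\rm dist}(w,\partial\Omega)$; so it suffices to prove the infinitesimal comparison
\[
{\rm dist}(G(z),\partial\Omega)\asymp {\rm Im}\,z\,|\partial G(z)|.
\]

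Finally I would close this estimate from the explicit formula \eqref{formula}. Writing $D(z)=1-\tfrac12(z-\bar z)P_g(\bar z)$, direct differentiation gives $\partial G(z)=g'(\bar z)/D(z)^2$ and $G(z)-g(\bar z)=(z-\bar z)g'(\bar z)/D(z)$. Since $G=g$ on $\mathbb H^*$, the reflection of Section \ref{5} satisfies $j(G(z))=G(\bar z)=g(\bar z)$, so with $\Omega^*=G(\mathbb H^*)=g(\mathbb H^*)$ the inequality \eqref{omega2} yields $|G(z)-g(\bar z)|^2\asymp {\rm dist}(G(z),\partial\Omega)\cdot{\rm dist}(g(\bar z),\partial\Omega^*)$, while Koebe distortion for the conformal map $g:\mathbb H^*\to\Omega^*$ gives ${\rm dist}(g(\bar z),\partial\Omega^*)\asymp {\rm Im}\,z\,|g'(\bar z)|$. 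Combining these, the factors $|g'(\bar z)|$ and $|D(z)|$ cancel to the right powers and produce ${\rm dist}(G(z),\partial\Omega)\asymp {\rm Im}\,z\,|g'(\bar z)|/|D(z)|^2\asymp {\rm Im}\,z\,|\partial G(z)|$, with all constants depending only on $k$ through the quasisymmetry constant of $G$ and a universal Koebe constant. The main obstacle is precisely this cancellation: the factor $D(z)$ is \emph{not} controlled by $\Vert S_g\Vert_\infty$, so any estimate comparing ${\rm dist}(G(z),\partial\Omega)$ with the nearest boundary point would fail; the key insight is that $g(\bar z)$ is the quasiconformal reflection of $G(z)$, so \eqref{omega2}---whose derivation uses only the quasisymmetry of $G$ and Koebe, not any bi-Lipschitz hypothesis, hence entailing no circularity---supplies the distance to $\partial\Omega$ with exactly the $|D(z)|^{-2}$ weight needed to match $|\partial G(z)|$.
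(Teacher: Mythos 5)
Your proof is correct, and while it shares the paper's overall frame---reduce to showing that the Ahlfors--Weill extension $G$ of \eqref{formula} is bi-Lipschitz from $(\mathbb H,\rho_{\mathbb H})$ onto $(\Omega,\rho_\Omega)$, since $G\circ F^{-1}$ is conformal and hence a hyperbolic isometry---the central estimate is obtained by a genuinely different mechanism. The paper proves two separate comparisons: $|G_z(z)|\asymp|g'(\bar z)|$, which requires the denominator $D(z)=1-\tfrac12(z-\bar z)P_g(\bar z)$ in \eqref{Gz} to be uniformly comparable to $1$ (this is where the implication $\Vert S_g\Vert_\infty<2\Rightarrow\Vert P_g\Vert_\infty<2$ from \cite{GH} enters), and $\rho_{\Omega}(G(z))\asymp\rho_{\Omega^*}(g(\bar z))$, proved by pulling back a shortest segment from $G(z)$ to $\partial\Omega$ under $G$, reflecting it across $\mathbb R$, and pushing it forward by $g$. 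You instead observe that $g(\bar z)$ is exactly the quasiconformal reflection $j(G(z))$ and invoke \eqref{omega2} together with Koebe distortion for $g$; the identities $G(z)-g(\bar z)=(z-\bar z)g'(\bar z)/D(z)$ and $\partial G=g'(\bar z)/D(z)^2$ then make the $D$-factors cancel exactly. You are right that the derivation of \eqref{omega2} in Section \ref{5} uses only the quasisymmetry of the global quasiconformal map and the estimate $\rho_\Omega^{-1}\asymp d(\cdot,\partial\Omega)$, not the bi-Lipschitz hypothesis stated at the head of that subsection, so there is no circularity. What your route buys: the bi-Lipschitz constant depends only on $\Vert\mu\Vert_\infty=\tfrac12\Vert S_g\Vert_\infty$ through $K(G)$ and universal Koebe constants, and the curve-pullback argument and the bound $|D|\asymp 1$ are avoided entirely. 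One small quibble: your remark that $D(z)$ ``is not controlled by $\Vert S_g\Vert_\infty$'' overstates the obstacle---the paper asserts, via \cite{GH}, that $\Vert P_g\Vert_\infty<2$, which does keep $D$ bounded away from $0$ for each fixed $g$; your argument simply has no need of this fact beyond the nonvanishing of $D$ implicit in Theorem \ref{AW}.
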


\begin{proof}
We will prove this theorem by showing that
the quasiconformal extension $G:\mathbb H \to G(\mathbb H)$ is a bi-Lipschitz diffeomorphism
with respect to the hyperbolic metrics under the circumstances of Theorem \ref{AW}.
To this end, we verify that the hyperbolic densities on
$\Omega=G(\mathbb H)$ and $\Omega^*=g(\mathbb H^*)$ satisfies
\begin{equation}\label{Gg1}
\rho_{\Omega}(G(z)) \asymp \rho_{\Omega^*}(g(\bar z)).
\end{equation}
In addition, we also verify that
\begin{equation}\label{Gg2}
|G_z(z)| \asymp |g'(\bar z)|
\end{equation}
for the conformal homeomorphism $g$ and its quasiconformal extension $G$ given in \eqref{formula}.
Then, by
$$
\rho_{\Omega}(G(z))|G_z(z)| \asymp \rho_{\Omega^*}(g(\bar z))|g'(\bar z)| \asymp \frac{1}{2\,|{\rm Im}\,\bar z|}=\frac{1}{2\,{\rm Im}\,z},
$$
we see that $G$ is a bi-Lipschitz real-analytic diffeomorphism. 

By a direct computation in formula \eqref{formula} for $G$, we have
\begin{equation}\label{Gz}
G_z(z)=\frac{g'(\bar z)}{(1-\frac{1}{2}(z-\bar z) P_g(\bar z))^2}.
\end{equation}
Here, the assumption $\Vert S_g \Vert_\infty<2$ implies 
$\Vert P_g \Vert_\infty<2$, which is implicitly shown in the proof of \cite[Proposition 5.5]{GH}.
Hence, the denominator of the fraction in \eqref{Gz} is uniformly bounded away from $0$, and thus \eqref{Gg2} follows.

Finally, we prove \eqref{Gg1}.
Considering $\rho_{\Omega}^{-1}(G(z)) \asymp d(\partial \Omega,G(z))$, we choose a shortest segment $\gamma$
from $G(z)$ to $\partial \Omega$.
The quasiconformality of $G$ implies that
\begin{align*}
d(\partial \Omega,G(z))=\int_{\gamma}|dw| \asymp \int_{G^{-1}(\gamma)}|G_z(z)||dz|.
\end{align*}
Let $\overline{G^{-1}(\gamma)}$ be the reflection of 
the curve $G^{-1}(\gamma)$ with respect to $\mathbb R$. It follows from \eqref{Gg2} that
$$
\int_{G^{-1}(\gamma)}|G_z(z)||dz| \asymp \int_{\overline{G^{-1}(\gamma)}}|g'(\bar z)||d\bar z|,
$$
where the right side term represents the length of the curve $g(\overline{G^{-1}(\gamma)})$.
Since this curve joins $g(\bar z)$ and $\partial \Omega^*$,
the length is bounded from below by $d(\partial \Omega^*,g(\bar z)) \asymp \rho_{\Omega^*}^{-1} (g(\bar z))$.
From these arguments, we obtain
$$
\rho_{\Omega}^{-1}(G(z)) \gtrsim \rho_{\Omega^*}^{-1} (g(\bar z)).
$$
If we begin with considering $\rho_{\Omega^*}^{-1} (g(\bar z))$, we obtain the converse inequality in a similar manner.
Hence, the proof of \eqref{Gg1} is completed.
\end{proof}

\begin{remark}
Takhtajan and Teo \cite[Chap.1, Lemma 2.5]{TT} proved using a different method that the quasiconformal self-homeomorphism $F^\mu$ with
a harmonic Beltrami coefficient $\mu$ is bi-Lipschitz
when $\Vert \mu \Vert_\infty$ is sufficiently small and utilized this property for the theory of
Weil--Petersson Teichm\"uller space.
\end{remark}

In Theorems \ref{AW} and \ref{biLip},
the norm of the Schwarzian derivative of a conformal mapping 
$g:\mathbb H^* \to \mathbb C$ is assumed to be less than $2$.
For an arbitrary conformal mapping $g$ that is quasiconformally extendable to $\mathbb C$,
we can also obtain a bi-Lipschitz extension $G$ by renewing that quasiconformal extension.
An equivalent but more precise formulation of this fact can be stated as follows.
This is essentially the same as the claim given in \cite[Section II.5.2]{Le}.
The similar arguments for certain specific classes of Beltrami coefficients such as $M_p(\mathbb H)$
and their applications
are presented in \cite{WM-1} and \cite{M}.

We say that $\mu_1$ and $\mu_2$ are \textit{Teichm\"uller equivalent} if the normalized quasiconformal self-homeomorphisms $F^{\mu_1}$ and $F^{\mu_2}$ of
$\mathbb H$ have the same boundary extension to $\mathbb R$.

\begin{theorem}
For every $\mu \in M(\mathbb H)$, there exists a finite number of harmonic Beltrami coefficients
$\tilde \mu_1, \ldots, \tilde \mu_n \in M(\mathbb H)$ such that
$$
\tilde \mu=\tilde \mu_n \ast \cdots \ast \tilde \mu_1
$$
is Teichm\"uller equivalent to $\mu$. In particular, the quasiconformal self-homeomorphism $F^{\tilde \mu}$ of $\mathbb H$
is a bi-Lipschitz real-analytic diffeomorphism with respect to the hyperbolic metric on $\mathbb H$. The number $n$ depends only on
$\Vert \mu \Vert_\infty$.
\end{theorem}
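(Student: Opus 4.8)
The plan is to realize $F^\mu$ as a composition of finitely many quasiconformal maps, each lying in a Teichm\"uller class small enough to be represented by a harmonic Beltrami coefficient, and then to apply Theorem \ref{biLip} to every factor. Write $k=\Vert\mu\Vert_\infty<1$ and consider the linear path $s\mapsto s\mu$ $(0\le s\le 1)$ in $M(\mathbb H)$, which joins $0$ to $\mu$ while keeping the sup-norm at most $k$. For the partition $0=s_0<s_1<\cdots<s_n=1$ into $n$ equal subintervals, set
$$
h_i=F^{s_i\mu}\circ\bigl(F^{s_{i-1}\mu}\bigr)^{-1}\qquad(i=1,\dots,n),
$$
so that telescoping gives $F^\mu=h_n\circ\cdots\circ h_1$. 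Each $h_i$ fixes $0,1,\infty$ and is therefore the normalized self-homeomorphism $F^{\nu_i}$ with complex dilatation $\nu_i=(s_i\mu)\ast(s_{i-1}\mu)^{-1}$. By \eqref{composition}, the phase factor has modulus one, so pointwise
$$
|\nu_i|=\frac{(s_i-s_{i-1})\,|\mu|}{1-s_{i-1}s_i|\mu|^2}\le\frac{k}{n(1-k^2)}.
$$
Hence $\Vert\nu_i\Vert_\infty$ can be forced below any prescribed threshold by choosing $n$ large, with the required $n$ depending only on $k$.

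Next I would make each factor harmonic. Fix the universal threshold $\delta>0$ for which the Bers projection $\mu\mapsto S_{g}$, assigning to a Beltrami coefficient the Schwarzian of the conformal part of the $\mathbb C$-solution obtained by extending it by $0$ across $\mathbb R$, sends $\{\Vert\cdot\Vert_\infty<\delta\}$ into $\{\Vert\cdot\Vert_\infty<2\}$; such a $\delta$ exists because this projection is holomorphic and vanishes at the origin. Choosing $n$ so that $k/(n(1-k^2))<\delta$, let $g_i$ be the conformal part associated with $\nu_i$, whence $\Vert S_{g_i}\Vert_\infty<2$. By Theorem \ref{AW}, the harmonic Beltrami coefficient
$$
\tilde\mu_i(z)=-\tfrac12|z-\bar z|^2 S_{g_i}(\bar z)
$$
of \eqref{harmonic} yields a quasiconformal extension whose conformal part is again $g_i$. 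Thus $\tilde\mu_i$ and $\nu_i$ have the same Bers image $S_{g_i}$ and hence the same Teichm\"uller class, so that $F^{\tilde\mu_i}$ and $h_i=F^{\nu_i}$ agree on $\mathbb R$.

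Finally I would assemble the factors. Since $F^{\tilde\mu_i}|_{\mathbb R}=h_i|_{\mathbb R}$ for every $i$, the boundary values compose, giving
$$
\bigl(F^{\tilde\mu_n}\circ\cdots\circ F^{\tilde\mu_1}\bigr)\big|_{\mathbb R}
=\bigl(h_n\circ\cdots\circ h_1\bigr)\big|_{\mathbb R}=F^\mu|_{\mathbb R},
$$
so that $\tilde\mu=\tilde\mu_n\ast\cdots\ast\tilde\mu_1$, the complex dilatation of $F^{\tilde\mu_n}\circ\cdots\circ F^{\tilde\mu_1}$, is Teichm\"uller equivalent to $\mu$. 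Each $F^{\tilde\mu_i}$ is a bi-Lipschitz real-analytic diffeomorphism of $\mathbb H$ by Theorem \ref{biLip}, with bi-Lipschitz constant bounded in terms of $\Vert S_{g_i}\Vert_\infty<2$, hence by a constant independent of $i$; a composition of $n$ such maps is again bi-Lipschitz and real-analytic, and $n$ depends only on $k=\Vert\mu\Vert_\infty$. The main obstacle is the passage from a small sup-norm of $\nu_i$ to the strict bound $\Vert S_{g_i}\Vert_\infty<2$ required by Theorem \ref{AW}; this is exactly where the continuity of the Bers projection at the origin enters, and it is what makes the number of factors depend only on $k$.
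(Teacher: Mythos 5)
Your proposal is correct and follows essentially the same route as the paper: subdivide the linear path $s\mapsto s\mu$ into $n$ equal steps, use formula \eqref{composition} to bound $\Vert(s_i\mu)\ast(s_{i-1}\mu)^{-1}\Vert_\infty$ by $\Vert\mu\Vert_\infty/(n(1-\Vert\mu\Vert_\infty^2))$, replace each factor by the Teichm\"uller-equivalent harmonic coefficient from Theorem \ref{AW}, and compose the resulting bi-Lipschitz maps from Theorem \ref{biLip}. The only cosmetic difference is that you invoke continuity of the Bers projection at the origin to get the smallness threshold, where the paper cites the explicit bound that $\Vert\mu\Vert_\infty<\tfrac13$ forces $\Vert S_g\Vert_\infty<2$.
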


\begin{proof}
If $\mu \in M(\mathbb H)$ satisfies $\Vert \mu \Vert_\infty<\frac{1}{3}$,
then a conformal homeomorphism $g$ of $\mathbb H^*$ that extends to $\mathbb H$ quasiconformally
with complex dilatation $\mu$ satisfies $\Vert S_g \Vert_\infty<2$ (see \cite[Theorem II.3.2]{Le}).
We choose $n \in \mathbb N$ such that 
$$
\frac{\Vert \mu \Vert_\infty}{n(1-\Vert \mu \Vert_\infty^2)}<\frac{1}{3}.
$$

Let $\mu_k=k\mu/n$ for $k=0,1,\ldots, n$. We consider $\mu_k \ast \mu_{k-1}^{-1}$ for $k=1,\ldots, n$. 
Formula \eqref{composition} implies that $\Vert \mu_k \ast \mu_{k-1}^{-1} \Vert_\infty<\frac{1}{3}$.
Then, by taking the conformal homeomorphism $g_k$ of $\mathbb H^*$ quasiconformally extendable to $\mathbb H$
with complex dilatation $\mu_k \ast \mu_{k-1}^{-1}$, we have a harmonic Beltrami coefficient $\tilde \mu_k \in M(\mathbb H)$ for $g_k$
by Theorem \ref{AW}. Let $\tilde \mu=\tilde \mu_n \ast \cdots \ast \tilde \mu_1$. Since $\mu_k \ast \mu_{k-1}^{-1}$ is
Teichm\"uller equivalent to $\tilde \mu_k$ for each $k=1,\ldots, n$,
we see that $\mu$ is Teichm\"uller equivalent to $\tilde \mu$.
Since $F^{\tilde \mu_k}$ is a bi-Lipschitz diffeomorphism, the composition $F^{\tilde \mu}=F^{\tilde \mu_n} \circ \cdots \circ F^{\tilde \mu_1}$ is also a bi-Lipschitz diffeomorphism.
\end{proof}

\section{The bi-Lipschitz extension in the Euclidean metric}\label{7}

Let $f:\mathbb R \to \mathbb C$ be a bi-Lipschitz embedding with respect to the Euclidean metric.
In other words, there exists a constant $L \geq 1$ such that 
$$
\frac{1}{L}\,|x-x'| \leq |f(x) -f(x')| \leq L\,|x-x'| 
$$
for any $x, y \in \mathbb R$. The image $\Gamma$ of such an embedding $f$ is called 
an (unbounded) \textit{chord-arc curve}.
It can be observed that $\Gamma$ is locally rectifiable and
its arc length between any two points on $\Gamma$ is bounded by $L^2$ times the distance between them.
Conversely, if a locally rectifiable Jordan curve $\Gamma$ passing through $\infty$ satisfies this property, then its arc length parametrization 
$f:\mathbb R \to \mathbb C$ is a bi-Lipschitz embedding.

Regarding the quasiconformal extension of such a special quasisymmetric embedding, we have the following result.

\begin{theorem}\label{euc}
Every bi-Lipschitz embedding $f:\mathbb R \to \mathbb C$ in the Euclidean metric
extends to a bi-Lipschitz self-homeomorphism $F$ of $\mathbb C$ in the Euclidean metric.
\end{theorem}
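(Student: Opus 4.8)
The plan is to build the extension separately on the two sides of the image curve and then glue. Write $\Gamma=f(\mathbb R)$, an unbounded chord-arc curve through $\infty$, and let $\Omega$ and $\Omega^*$ be the two complementary Jordan domains. A bi-Lipschitz map on $\overline{\mathbb H}$ and one on $\overline{\mathbb H^*}$ that agree with $f$ along $\mathbb R$ patch together into a bi-Lipschitz self-map of $\mathbb C$: the upper bound across $\mathbb R$ is immediate by the triangle inequality through the crossing point, and the only delicate point is the lower bound for a pair $z_1\in\overline{\mathbb H}$, $z_2\in\overline{\mathbb H^*}$, which follows from the chord-arc separation property, since $F(z_1)\in\overline\Omega$ and $F(z_2)\in\overline{\Omega^*}$ lie on opposite sides of $\Gamma$. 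Thus it suffices to produce a Euclidean bi-Lipschitz homeomorphism $F_+:\overline{\mathbb H}\to\overline\Omega$ extending $f$, the construction on $\mathbb H^*$ being identical.

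To construct $F_+$ I would deliberately \emph{avoid} the Riemann map $\mathbb H\to\Omega$: being conformal it distorts Euclidean distances, and the chord-arc hypothesis only controls arc length against harmonic measure (an $A_\infty$-type comparison), which makes the induced boundary parametrization quasisymmetric but not bi-Lipschitz. (The wedge $\Gamma=\mathbb R_+\cup e^{i\theta}\mathbb R_+$ is chord-arc, yet the conformal map $z\mapsto z^{\theta/\pi}$ is not bi-Lipschitz, so a genuinely metric construction is forced.) Instead I would define $F_+$ geometrically. For $z=x+iy\in\mathbb H$ with $0<y\le 1$, let $\tau(x,y)$ be the unit secant direction $\big(f(x+y)-f(x-y)\big)/|f(x+y)-f(x-y)|$, which is well defined and comparable to a tangent direction because $|f(x+y)-f(x-y)|\ge 2y/L$, and let $n(x,y)=i\,\tau(x,y)$ be the unit normal rotated so as to point into $\Omega$. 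I would then set
\[
F_+(x+iy)=f(x)+c\,y\,n(x,y)
\]
for a small fixed constant $c=c(L)$, interpolating to a fixed half-plane model for $y\ge 1$ (legitimate since $\Omega$ is comparable to a half-plane at large scale). The chord-arc condition guarantees that the arc $f([x-Cy,x+Cy])$ lies in a thin tube and does not return, so this point lands in $\Omega$ at distance $\asymp y$ from $\Gamma$, with all constants depending only on $L$.

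The upper Lipschitz bound is routine: $f$ is $L$-Lipschitz, and secant directions at scale $y$ vary by $O(1)$ over $x$-intervals of length $y$, so $|\partial_x n|,|\partial_y n|=O(1/y)$ and hence $|\nabla F_+|\lesssim 1$. The main obstacle is the lower bound, equivalently a uniform lower bound for the Jacobian together with injectivity: I must show that the (almost-everywhere defined) tangential derivative $\partial_xF_+$, which is close to $f'(x)$ and therefore nearly tangent to $\Gamma$, stays uniformly transverse to the nearly normal vector $\partial_yF_+\approx c\,n(x,y)$, so that the area element is $\asymp 1$ and no folding occurs. This is exactly the step where the full strength of the chord-arc hypothesis (not merely the quasicircle property) enters, through an estimate controlling the rotation of the secant direction across comparable scales and along the curve. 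Once the Jacobian is bounded below, local injectivity together with a properness and degree argument upgrades $F_+$ to a homeomorphism of $\overline{\mathbb H}$ onto $\overline\Omega$, and the lower Lipschitz bound follows by integrating $|\nabla F_+|^{-1}$ along curves. Gluing $F_+$ with the corresponding $F_-$ then yields the desired bi-Lipschitz $F$ on $\mathbb C$.
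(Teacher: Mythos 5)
There is a genuine gap, and it sits exactly at the step you yourself flag as ``the main obstacle'': the lower bound on the Jacobian of $F_+(x+iy)=f(x)+c\,y\,n(x,y)$ is not merely left unproved --- for chord-arc curves with large constant it actually fails. Writing $v=f(x+y)-f(x-y)$, you have $|v|\ge 2y/L$ and $|\partial_y v|\le 2L$, so the only available bound is $|\partial_y\tau|\le 2L^2/y$; hence $\partial_yF_+=c\bigl(n+y\,\partial_y n\bigr)$, where the perturbation $y\,\partial_y n$ has modulus up to order $L^2$, comparable to (and capable of cancelling) the unit vector $n$. This is not an artifact of the secant definition: near a corner, or a near-slit configuration that chord-arc curves with large $L$ do contain, the direction $\tau(x,y)$ genuinely rotates by a definite, non-small angle as $y$ doubles, so no mollification can make $y|\nabla n|$ small. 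Consequently $\partial_yF_+$ need not stay transverse to $\partial_xF_+\approx f'(x)$, the map can fold, and shrinking $c$ does not help because both columns of the differential scale with $c$ in the same way. The normal-pushout construction is known to work only when the tangent direction has small oscillation (e.g.\ $L$ close to $1$); for general $L$ the ``estimate controlling the rotation of the secant direction'' that you invoke does not exist, which is precisely why none of the published proofs proceed this way. Two further unsupported points: the claim that $f(x)+cy\,n(x,y)$ lies in $\Omega$, on the correct side consistently in $(x,y)$ and at distance $\asymp y$ from $\Gamma$, needs the same missing control; and the interpolation ``to a fixed half-plane model for $y\ge1$'' is illegitimate, since an unbounded chord-arc curve (a wedge, say) need not resemble a line at any scale.

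Your reason for rejecting the route the paper actually takes is also based on a misreading of it. Following Tukia, Jerison--Kenig, Pommerenke and Semmes, one factors $f=g\circ h$ with $g:\mathbb H\to\Omega$ a Riemann map and $h=g^{-1}\circ f:\mathbb R\to\mathbb R$ the welding homeomorphism, extends $h$ to a quasiconformal self-map $H$ of $\mathbb H$ that is bi-Lipschitz in the \emph{hyperbolic} metric (Beurling--Ahlfors or Douady--Earle), and sets $F_+=g\circ H$. You are right that $g$ alone is not Euclidean bi-Lipschitz, but that is never required: only the composition $g\circ H$ must be, and its boundary values are the bi-Lipschitz map $f$ by construction, so $H$ exactly compensates the boundary distortion of $g$. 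The Euclidean bi-Lipschitz property of $g\circ H$ is then extracted from Koebe distortion, the hyperbolic bi-Lipschitz property of $H$, and the chord-arc comparison between arc length and harmonic measure --- which is where the chord-arc hypothesis enters, in place of the unavailable rotation estimate. So a ``genuinely metric construction'' is not forced, and the analytic route closes the very gap your geometric one leaves open.
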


\begin{remark}
An orientation-preserving bi-Lipschitz self-homeomorphism $F$ of $\mathbb C$ in the Euclidean metric is quasiconformal
with a maximal dilatation $K(F) \leq L(F)^2$, where $L(F)$ is the bi-Lipschitz constant of $F$.
The bi-Lipschitz property of $F$ as well as that of an embedding $f:\mathbb R \to \mathbb C$
in the Euclidean metric is equivalent to the bi-Lipschitz property in the spherical metric
(see \cite[Section 7]{Tu}).
Indeed, assuming that $F(0)=0$ by an affine translation, we can see that $|F(z)|/|z|$ is uniformly bounded from above and below
in both cases.
Thus, we can extend the bi-Lipschitz property of $F$ to 
the Riemann sphere $\widehat {\mathbb C}$ in the spherical metric by setting $F(\infty)=\infty$.
Moreover, this property is invariant under a M\"obius transformation.
\end{remark}

Additionally, a bi-Lipschitz self-homeomorphism of $\mathbb C$ in the Euclidean metric
satisfies the bi-Lipschitz property on a half-plane or a disk in the hyperbolic metric.

\begin{proposition}
If $F:\mathbb C \to \mathbb C$ is a bi-Lipschitz homeomorphism in the Euclidean metric, then
$F|_{\mathbb H}$ and $F|_{\mathbb H^*}$ are bi-Lipschitz homeomorphisms in the hyperbolic metrics.
\end{proposition}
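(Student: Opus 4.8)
The plan is to reduce the global statement to an infinitesimal comparison of the two metrics and then integrate along geodesics. Write $\Omega=F(\mathbb H)$ and $\Omega^*=F(\mathbb H^*)$. Since $F$ extends to a homeomorphism of $\widehat{\mathbb C}$ fixing $\infty$, both $\Omega$ and $\Omega^*$ are quasidisks, in particular simply connected proper subdomains of $\mathbb C$ with common boundary $\partial\Omega=\partial\Omega^*=F(\mathbb R)$, so their hyperbolic metrics are defined. For a simply connected domain the Koebe distortion theorem together with the Schwarz--Pick lemma gives the standard two-sided estimate
\[
\rho_\Omega(w)\asymp \frac{1}{d(w,\partial\Omega)},
\]
with universal constants (and likewise $\rho_{\mathbb H}(z)=1/(2\,{\rm Im}\,z)\asymp 1/d(z,\mathbb R)$, where $d(z,\mathbb R)={\rm Im}\,z$). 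Thus it suffices to control the boundary distance $d(F(z),\partial\Omega)$ and the Euclidean length distortion $|dF|$ in terms of the data on $\mathbb H$.

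First I would establish the boundary-distance comparison $d(F(z),\partial\Omega)\asymp {\rm Im}\,z$ using only the Euclidean bi-Lipschitz inequality. For the upper bound, choose $x_0\in\mathbb R$ realizing $|z-x_0|={\rm Im}\,z$; then $d(F(z),\partial\Omega)\le |F(z)-F(x_0)|\le L\,{\rm Im}\,z$. For the lower bound, every $w\in\mathbb R$ satisfies $|F(z)-F(w)|\ge L^{-1}|z-w|\ge L^{-1}{\rm Im}\,z$, and taking the infimum over $\partial\Omega=F(\mathbb R)$ yields $d(F(z),\partial\Omega)\ge L^{-1}{\rm Im}\,z$. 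Combined with the density estimate above, this gives $\rho_\Omega(F(z))\asymp \rho_{\mathbb H}(z)$ with constants depending only on $L$.

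Next I would record the infinitesimal Euclidean distortion. Being $L$-bi-Lipschitz, $F$ is Lipschitz, hence differentiable almost everywhere and absolutely continuous on lines, and its a.e.\ derivatives satisfy $|F_z|+|F_{\bar z}|\le L$ and $|F_z|-|F_{\bar z}|\ge L^{-1}$, so the length element obeys $|dF|\asymp |dz|$. Multiplying this by $\rho_\Omega(F(z))\asymp\rho_{\mathbb H}(z)$ produces the infinitesimal bi-Lipschitz relation $\rho_\Omega(F(z))|dF|\asymp \rho_{\mathbb H}(z)|dz|$, with comparability constant $L'$ depending only on $L$. I would then integrate: for $z_1,z_2\in\mathbb H$, pushing the hyperbolic geodesic of $\mathbb H$ forward by $F$ and measuring its $\Omega$-length gives $d_\Omega(F(z_1),F(z_2))\le L'\,d_{\mathbb H}(z_1,z_2)$, while pulling the hyperbolic geodesic of $\Omega$ back by $F^{-1}$ gives the reverse inequality; the argument for $F|_{\mathbb H^*}$ is identical. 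The one step I expect to require genuine care is the boundary-distance comparison, where both directions of the Euclidean bi-Lipschitz estimate and the identification $\partial\Omega=F(\mathbb R)$ are essential; everything else is the routine passage between the quasihyperbolic and hyperbolic metrics on a simply connected domain, exactly as used in Section \ref{6}.
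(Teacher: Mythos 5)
Your proposal is correct and follows essentially the same route as the paper: reduce to the infinitesimal comparison $|dF|\asymp|dz|$, use $\rho_\Omega^{-1}(F(z))\asymp d(F(z),\partial\Omega)$, and prove $d(F(z),F(\mathbb R))\asymp {\rm Im}\,z$ via nearest boundary points in both directions. Your write-up of the boundary-distance estimate is in fact slightly cleaner than the paper's (whose displayed inequality chain has the roles of $L$ and $1/L$ garbled), but there is no substantive difference in method.
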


\begin{proof}
We can consider the bi-Lipschitz property of $F$ in the infinitesimal form
given by
$L^{-1}|dz| \leq |dF| \leq L |dz|$ for a constant $L \geq 1$.
Therefore, for the bi-Lipschitz property of $F|_{\mathbb H}$ with respect to the hyperbolic metrics
on $\mathbb H$ and on $F(\mathbb H)=\Omega$, that is,
$$
\frac{1}{\widetilde L}\, \frac{|dz|}{{\rm Im}\, z} \leq \frac{|dF|}{\rho^{-1}_\Omega(F(z))} \leq \widetilde L\, \frac{|dz|}{{\rm Im}\, z}
$$
for a constant $\widetilde L \geq 1$, it suffices to prove that ${\rm Im}\,z \asymp \rho^{-1}_\Omega(F(z))$. 
By $\rho^{-1}_\Omega(F(z)) \asymp d(F(z),F(\mathbb R))$, this follows from ${\rm Im}\,z \asymp d(F(z),F(\mathbb R))$. 
The same argument holds for $F|_{\mathbb H^*}$.

Let us take the nearest point $\xi$ in $F(\mathbb R)$ from $F(z)$.
The bi-Lipschitz property of $F$ in the Euclidean metric implies that
$$
d(F(z),F(\mathbb R))=d(F(z),\xi) \leq \frac{1}{L}\, d(z,F^{-1}(\xi)) \leq \frac{1}{L}\, {\rm Im}\,z.
$$
The converse inequality is also derived if we begin with taking the nearest point $x$ in $\mathbb R$ from $z$.
\end{proof}

Theorem \ref{euc} was reaffirmed by Tukia \cite{Tu}, in which his previous proof was updated.
Other proofs can also be found in Jerison and Kenig \cite[Proposition 1.13]{JK}, Pommerenke \cite[Theorem 7.10]{Pom}, and Semmes \cite[Lemma 4.11]{Se}.
The arguments were carried out by extending $f$ to a quasiconformal homeomorphism,
and the bi-Lipschitz property of the quasiconformal extension in the hyperbolic metric was utilized.
For example, the Beurling--Ahlfors extension was used in \cite{Tu} and the Douady--Earle extension in \cite{Pom}.
In \cite{Se}, 
a modified Beurling--Ahlfors extension with 
compactly supported smooth kernels $\varphi$ and $\psi$ was consecutively applied to obtain a further property of this extension.

\end{document}